\documentclass[12pt]{amsart}
\usepackage{amssymb}
\usepackage{latexsym}
\usepackage{graphicx}
\usepackage{subfigure}
\usepackage{tikz}
\usepackage[linktocpage=true]{hyperref}
\usepackage{ulem}

\usepackage{color}

\hypersetup{ colorlinks   = true, %Colours links instead of ugly boxes
urlcolor  = blue, %Colour for external hyperlinks
linkcolor    = blue, %Colour of internal links
citecolor   = blue %Colour of citations
}

\def\beq{\begin{equation}}
\def\eeq{\end{equation}}

\def\sp{\mathrm{sp}}
\def\Sp{\mathrm{Sp}}

\def\det{\mathrm{det}\ }

\def\Id{\mathrm{Id}}
\def\bm{\begin{matrix}}
\def\em{\end{matrix}}

\newcommand{\A}{{\mathbb A}}

\newcommand{\R}{{\mathbb R}}

\newcommand{\C}{{\mathbb C}}

\newcommand{\B}{{\mathbb B}}

\newcommand{\N}{{\mathbb N}}
\newcommand{\I}{{\mathbb I}}

\newcommand{\J}{{\mathbb J}}

\newcommand{\ri}{{\rm i}}
\newcommand{\CA}{{\mathcal A}}
\newcommand{\CC}{{\mathcal C}}

\newcommand{\CF}{{\mathcal F}}

\newcommand{\CB}{{\mathcal B}}
\newcommand{\CH}{{\mathcal H}}

\newcommand{\CL}{{\mathcal L}}
\newcommand{\CM}{{\mathcal M}}

\newcommand{\CX}{{\mathcal X}}

\newcommand{\CU}{{\mathcal U}}

\newcommand{\CQ}{{\mathcal Q}}

\newcommand{\CR}{{\mathcal R}}
\newcommand{\CT}{{\mathcal T}}

\newtheorem{thm}{Theorem}[section]

\newtheorem{prop}[thm]{Proposition}
\newtheorem{lemma}[thm]{Lemma}

\theoremstyle{definition}
\newtheorem{remark}[thm]{Remark}

\newcommand{\la}{\langle}
\newcommand{\ra}{\rangle}

\definecolor{gainsboro}{rgb}{0.86, 0.86, 0.86}
\definecolor{gray-light}{rgb}{0.75, 0.75, 0.75}
\definecolor{gray}{rgb}{0.5, 0.5, 0.5}

\begin{document}

\title[]{}

\title[]{Generalized Reducibility and Growth of Sobolev Norms}

\author{Zhenguo Liang}
\address{School of Mathematical Sciences and Key Lab of Mathematics for Nonlinear Science, Fudan University, Shanghai 200433, China}
\email{zgliang@fudan.edu.cn}
\thanks{Z. Liang was partially supported by NSFC grant 12531006 and partially supported by the New Cornerstone Science Foundation through the New Cornerstone Investigator Program. }
	
\author{Zhiyan Zhao}
\address{Universit\'e C\^ote d'Azur, CNRS, Laboratoire J. A. Dieudonn\'{e}, 06108 Nice, France}
\email{zhiyan.zhao@univ-cotedazur.fr}
\thanks{Z. Zhao was partially supported by the French government through the National Research Agency (ANR) grant for the project KEN ANR-22-CE40-0016 and partially supported by NSFC grants 12271091, 12471178.}

%\keywords{}

%\date{\today}

\begin{abstract}
We introduce the concept of {\it generalized reducibility}, which provides a flexible framework for analyzing the long-time behavior of solutions to quadratic quantum Hamiltonians.
As an application of this notion, for many prescribed sub-exponential growth rates $f(t)$, either monotone or oscillatory, we explicitly construct time-decaying perturbations of the one-dimensional quantum harmonic oscillator such that the Sobolev norms of solutions grow at the rate $f(t)$.
\end{abstract}

%\begin{keyword}
%%% keywords here, in the form: keyword \sep keyword
%1-d quantum harmonic oscillator; time quasi-periodic; reducibility; growth of Sobolev norms
%%% PACS codes here, in the form: \PACS code \sep code
%
%%% MSC codes here, in the form: \MSC code \sep code
%%% or \MSC[2008] code \sep code (2000 is the default)
%\MSC[2010] 	35Q40; 35Q41; 47G30
%\end{keyword}

%% \linenumbers

\maketitle

\section{Introduction}

This paper focuses on the Sobolev norms of solutions to the quadratic Hamiltonian PDE
\begin{equation}\label{orig-equ-1}
\frac{1}{{\rm i}}\partial_t \psi=H(t, Z)\psi,\quad t\geq t_0,
\end{equation}
with initial condition $\psi(t_0, \cdot)\in L^2(\R^n)$, where
$$Z^*=(D^*, X^*)=(D_1,\cdots, D_n,X_1,\cdots, X_n)$$ with $D_j=-\ri\partial_{x_j}$, and $X_j$ the multiplication by $j-$th coordinate function, i.e. $(X_j f)(x)=x_j f(x)$.  The linear operator $H(t,Z)$ in Eq. (\ref{orig-equ-1}) can be decomposed as
\begin{itemize}
\item {\it homogeneous part} $\CQ_{\CA(t)}(Z):=-\frac12\la Z,\CA(t)\J_nZ\ra$ with
\begin{eqnarray*}
\CA(\cdot)&=&\left(\begin{array}{cc}A_{11}(\cdot) & A_{12}(\cdot) \\ A_{21}(\cdot) & -A_{11}(\cdot)^* \end{array}\right)\in C_b^{0}(\R, \sp(n,\R)),\\
\J_n:&=&\left(\begin{array}{cc}
0 & \I_n \\ -\I_n & 0 \end{array}\right), 	
\end{eqnarray*}
\item {\it linear part} $\CL_{\ell(t)}(Z):=\la \ell(t),Z\ra$ with
$$\ell(\cdot)=(l_1(\cdot)^*,l_2(\cdot)^*)^* \in C_b^{0}(\R,\R^{2n}),$$
\item {\it scalar part} $c(t)$.
\end{itemize}
The quadratic quantum Hamiltonian $H(t, Z)$ is quantized by the quadratic polynomial classical Hamiltonian
\begin{equation}\label{classicalHam}
h(t,\xi,x) =-\frac12\la z,\CA(t)\J_n z\ra+\la \ell(t), z\ra + c(t),\qquad \xi,x\in \R^{n},
	\end{equation}
	where $z\in \R^{2n}$ such that $z^*=(\xi^*,x^*)$, and the three terms of $h(t,\xi,x)$ can also be called {\it homogeneous part}, {\it linear part} and {\it scalar part} respectively in the classical Hamiltonian sense.
		
As a typical example of the quantum Hamiltonian $H(t,Z)$ and a  well-known ``equilibrium state", the $n-$dimensional quantum harmonic oscillator (QHO for short),
\begin{equation}\label{nD-QHO}
\CT:=\CQ_{\J_n}(Z)=\frac12(\la D,D\ra+ \la X, X\ra),
\end{equation}
%i.e., $H(t,Z)$ with constant coefficients $\CA(\cdot)=\J_n$, $\ell(\cdot)=0$, $c(\cdot)=0$,
as well as their perturbations, are well investigated in many recent works and it is closely related to the present paper.

\subsection{Generalized reducibility}

For a specific quantum Hamiltonian $H(t,Z)$ with constant coefficients, there exists a straightforward approach to analyzing the propagators of Eq. (\ref{orig-equ-1}). This includes, for  instance, Mehler's formula for the $n-$dimensional QHO $\CT$ defined in (\ref{nD-QHO}). The qualitative properties of the solutions can then be investigated through direct computations.

Consequently, to efficiently describe the behavior of solutions to the time-dependent equation, it is practical to remove the time-dependence from the original equation via an appropriate transformation. This process is referred to as {\it reducibility}.
A considerable amount of prior research has been devoted to the reducibility of
PDEs with time-dependent coefficients; see, for example, \cite{Bam2018, Bam2017, BGMR2018, BM2018} and the references therein.
Moreover, the study of the growth of Sobolev norms (defined in Section~\ref{sec_intro_growth}) of solutions is  closely connected to the reducibility of the underlying quantum Hamiltonian, particularly in the linear setting.

\medskip

In the works mentioned above, reducibility is typically achieved through an $L^{2}-$unitary transformation that is {\it bounded on Sobolev spaces} (defined in Section \ref{sec_intro_growth}), and the growth of Sobolev norms is subsequently inferred by conjugating the equation to one with constant coefficients.
We propose a different perspective. Instead of requiring the conjugating transformation to be bounded on Sobolev spaces, we allow an $L^{2}-$unitary transformation that is not necessarily Sobolev-bounded. This transformation conjugates the original equation to a constant-coefficient equation whose solutions have uniformly bounded Sobolev norms, more precisely, to the constant-coefficient equation
 $$\frac{1}{{\rm i}}\partial_t \psi=\CT \psi \quad {\rm with}  \ \CT \ {\rm defined \  in} \ (\ref{nD-QHO}). $$
The growth of the Sobolev norms for solutions of the original equation is then recovered through a detailed analysis of the mapping properties of the conjugating transformation itself.
In contrast to reducibility in the classical sense, such an $L^{2}-$unitary conjugation can always be achieved. We refer to this notion as {\it generalized reducibility}.
To distinguish it from the previously studied notion of reducibility, we refer to reducibility in the previous sense as {\it normal reducibility}.

\begin{thm}\label{thm-reduc}{\bf (Generalized reducibility)}
The quadratic quantum Hamiltonian (\ref{orig-equ-1})
is {\it reducible in the generalized sense}, i.e., there exists an $L^2-$unitary transformation $$\psi(t)=\CU(t)\varphi(t),$$
such that the quantum Hamiltonian (\ref{orig-equ-1})
is conjugated to the constant-coefficient equation
$$\frac{1}{{\rm i}}\partial_t \varphi=\CT \varphi.$$
\end{thm}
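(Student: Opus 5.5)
The plan is to prove the statement with no smallness or non-resonance condition, by building $\CU(t)$ from the exact propagators. Let $\CU_H(t,t_0)$ be the propagator of (\ref{orig-equ-1}), i.e. $\psi(t)=\CU_H(t,t_0)\psi(t_0)$, and set
$$\CU(t):=\CU_H(t,t_0)\,e^{\ri (t-t_0)\CT}.$$
Then $\varphi(t)=\CU(t)^{-1}\psi(t)=e^{-\ri(t-t_0)\CT}\CU_H(t,t_0)^{-1}\psi(t)=e^{-\ri(t-t_0)\CT}\psi(t_0)$. Hence $\varphi$ solves $\frac1{\ri}\partial_t\varphi=-\CT\varphi$. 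The sign depends on convention, so we define $\CU(t):=\CU_H(t,t_0)e^{-\ri(t-t_0)\CT}$ or its counterpart so that $\varphi(t)=e^{\ri(t-t_0)\CT}\varphi(t_0)$ solves $\frac1{\ri}\partial_t\varphi=\CT\varphi$. Unitarity on $L^2$ follows because $\CU$ is a composition of two unitary groups or propagators. So the real content is showing that $\CU_H$ exists as a strongly continuous family of $L^2$-unitaries. The paper wants this done explicitly through the metaplectic representation, since later sections analyse the mapping properties of $\CU$.

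The construction goes through the classical side. We solve the linear Hamiltonian system generated by (\ref{classicalHam}),
$$\dot z=\J_n\nabla_z h=\CA(t)z+\J_n\ell(t),$$
where the sign is chosen to match the convention $\CQ_{\CA}=-\frac12\la Z,\CA\J_nZ\ra$. Because $\CA\in C_b^0(\R,\sp(n,\R))$, the homogeneous flow $\Phi(t)$ exists globally and lies in $\Sp(n,\R)$. This holds since $\frac{d}{dt}(\Phi^*\J_n\Phi)=0$ when $\CA\in\sp(n,\R)$. The inhomogeneous part is then handled by the Duhamel integral $w(t)=\Phi(t)\int_{t_0}^t\Phi(s)^{-1}\J_n\ell(s)\,ds$.

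We then quantize, following the exact Egorov correspondence for quadratic Hamiltonians. The homogeneous part is quantized by a metaplectic operator $\mathcal M(t)$ that lifts $\Phi(t)$. We choose the lift continuously in $t$, so that the double-cover ambiguity disappears by continuity from $\mathcal M(t_0)=\Id$. The linear part is quantized by a Heisenberg–Weyl translation $e^{\ri\la \mathcal J_n w(t), Z\ra}$, i.e. a phase-space shift by $w(t)$. The scalar part and the symplectic-area term are quantized by a phase $e^{\ri\theta(t)}$, with $\theta$ an explicit integral of $c$ and of $\la w,\J_n\dot w\ra$. We then check directly that
$$\CU_H(t,t_0)=e^{\ri\theta(t)}\,\mathcal T_{w(t)}\,\mathcal M(t)$$
satisfies (\ref{orig-equ-1}). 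To do this, differentiate and use two facts: $\frac{d}{dt}\mathcal M(t)=\ri\CQ_{\CA(t)}\mathcal M(t)$, which follows from the Lie algebra isomorphism $\sp(n,\R)\to$ quadratic forms, and the Weyl commutation relations, which produce the $\CL_\ell$ term and the $c$ term. Composing with $e^{\ri(t-t_0)\CT}$, which is itself the metaplectic lift of the rotation $e^{(t-t_0)\J_n}$, shows that $\CU(t)$ is again metaplectic times translation times phase. Its underlying classical map is $\Phi(t)e^{-(t-t_0)\J_n}$ together with the shift $w(t)$.

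The main obstacle is the rigorous justification of the differentiation, since $\CQ_{\CA}$ and $\CL_\ell$ are unbounded and the coefficients are only continuous. We carry it out on the Schwartz space $\mathcal S(\R^n)$. It is invariant under metaplectic operators and translations, and the Hermite functions give a core. On $\mathcal S$ the identities are algebraic consequences of the commutator relations $[\CQ_{\CA},Z]=\ri\CA Z$, up to sign. We then extend by density and unitarity to $L^2$, interpreting solutions in the mild sense. Sobolev-boundedness of $\CU(t)$ is neither needed nor claimed: $\Phi(t)$ may grow, which is precisely what later produces growth of norms.
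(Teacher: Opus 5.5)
Your proposal is correct and is essentially the paper's construction. The paper solves $\dot U=\CA(t)U-U\J_n$, $U(t_0)=\J_n$, checks $U(t)\in\Sp(n,\R)$, and feeds $U$ and a particular solution $z_*$ into the cited correspondence (Proposition \ref{prop_MetaandSch}). Since $U(t)=W(t)\J_n e^{-(t-t_0)\J_n}$ by (\ref{Wtex}), the resulting $\CU(t)=e^{\ri\int_0^t\CC(\tau)d\tau}\rho(z_*(t))\CM(U(t))$ is, for $z_*(t_0)=0$, your $\CU_H(t,t_0)e^{-\ri(t-t_0)\CT}$ up to a phase and the constant right factor $\CM(\J_n)$. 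You prove that correspondence by hand instead of citing it, and your symplecticity check via $\frac{d}{dt}(\Phi^*\J_n\Phi)=0$ is cleaner than the paper's column-by-column verification.

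There is one convention slip to fix in that hand computation. With $\CQ_{\CA}=-\frac12\la Z,\CA\J_n Z\ra$ one has $\J_n\nabla_z h=-\CA^*z+\J_n\ell$, not $\CA z+\J_n\ell$. The translation that generates $\CL_{\ell}$ is $\rho(v(t))=e^{\ri\la v(t),Z\ra}$ with $\dot v=\CA v+\ell$, as in Proposition \ref{prop_MetaandSch}. Your $e^{\ri\la \J_n w,Z\ra}$ with $\dot w=\CA w+\J_n\ell$ does not produce $\CL_{\ell}$, though the direct differentiation you plan would expose and correct this.
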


A more detailed description in the Sobolev setting, together with a comparison between normal reducibility and generalized reducibility, will be presented in Section~\ref{sec_intro_growth}.

\subsection{Growth of Sobolev norms}\label{sec_intro_growth}

With the generalized reducibility obtained in Theorem \ref{thm-reduc}, we are able to describe the growth of {\it Sobolev norms} for quadratic quantum Hamiltonian, especially for the quantum Hamiltonians that are irreducible in the normal sense.

The {\it Sobolev space} (also referred to as {\it $\CH^s-$space} for a given $s\geq 0$) is defined as
\begin{equation}\label{defiSobo}
\CH^s(\R^n):=\left\{u\in L^2(\R^n):\CT^{\frac{s}{2}} u \in L^2(\R^n)\right\},
\end{equation}
with $\CT$ being the QHO defined as in (\ref{nD-QHO}), and equipped with the {\it Sobolev norm} (also known as {\it $\CH^s-$norm})
\begin{equation}\label{defiSoboNorm}
\|u\|_{s}:=\left(\|\CT^{\frac{s}{2}} u\|^2_{L^2}+\|u\|^2_{L^2}\right)^{\frac12}.
\end{equation}
Several equivalent forms of the $\CH^s-$norm will be given in Section \ref{sec_pre}.

\medskip

As an important issue in mathematical physics, the behavior of solutions to Hamiltonian PDEs in Sobolev spaces was first proposed by Bourgain \cite{Bou96} and subsequently developed through a series of pioneering works (see, e.g., \cite{CKSTT2010, GG2016, GK2015, HPTV2015}). This topic has attracted sustained interest over the past decades.

Beyond the boundedness of Sobolev norms and the existence of ``weak" upper bounds (such as $t^\varepsilon$ or logarithmic) on their growth (see \cite{BGMR2018, BGMR2021, BLM20, BL2025} and the references therein), a more challenging problem is to establish the existence of unbounded trajectories in Sobolev spaces. This question is naturally related to weak turbulence phenomena and energy cascade mechanisms, as well as to the precise determination of the time growth rates of Sobolev norms. Over the past decades, substantial progress has been made in this direction for a wide class of Hamiltonian PDEs.

As mentioned in the previous subsection, it is effective to study the long-time behavior of solutions through normal reducibility of the original quantum Hamiltonian, especially for the linear Hamiltonians. %Following the approach of \cite{GY00},
\begin{itemize}
\item For a one-dimensional (1D) QHO with a time-periodic linear potential, Bambusi-Gr\'ebert-Maspero-Robert \cite{BGMR2018} proved polynomial growth of order $t^s$ for the $\CH^s-$norms by reducing the system to a transport equation.
\item For 1D QHO with time quasi-periodic perturbations, which are quadratic polynomials of $(D,X)$, various growth rates of $\CH^s-$norms have been observed depending on the normal forms of reducibility \cite{LZZ2021, LLZ2022}:  $t^{s}$ polynomial growth for parabolic normal form, exponential growth for hyperbolic normal form, and  $t^{2s}$ polynomial growth when reducible to the Stark Hamiltonian.

\item More generally, in the $n$-dimensional setting, a complete classification of Sobolev norm growth can be obtained under the assumption of reducibility (see Theorem 1.2 of \cite{LLZ2025}). In particular, if the quantum Hamiltonian (\ref{orig-equ-1}) is reducible in the sense of Theorem 1.2 of \cite{LLZ2025}, then the long-time growth behavior of $\CH^s$-norms can be fully characterized according to the associated normal form.

\item Beyond exact reducibility, the notion of {\it almost reducibility} in the time quasi-periodic setting also plays a significant role in the study of Sobolev norm growth. In this framework, one typically observes oscillatory growth phenomena, reflecting the presence of persistent but non-removable time dependence. Based on the almost reducibility scheme developed by Eliasson \cite{Eli1992} for time quasi-periodic linear systems, as long as the time-dependent error remains within a controllable range, the growth of Sobolev norms can still be effectively captured through the conjugating transformation. Meanwhile, the reduced system exhibits an almost stable behavior over long time scales. We refer to \cite{LZZ2024} for a detailed analysis.
\end{itemize}

%{\clo For Hamiltonian PDEs, almost reducibility is also useful to see the long-time behaviors of solutions.
%Eliasson \cite{E2017} showed almost reducibility for the quasi-periodic linear wave equation, through which a log-log-bound on Sobolev norms of solutions is obtained.
%Similar idea was employed by Bambusi and his collaborators \cite{BGMR2021, BLM20, BL2025} for time dependent Schr\"odinger equations, with a more and more regularized remaining part along with the iteration step, instead of the usual asymptotic smallness assumption. A $t^\epsilon-$upper bound, for arbitrary $\epsilon>0$, on Sobolev norms of solutions is obtain through such an argument.}

Another approach commonly used to obtain the unbounded trajectories is to construct specific perturbations towards the growth to infinity.
For 1D QHO, Delort \cite{Del2014} (followed by a refined proof of Maspero \cite{Mas2018}) constructed time periodic order-zero pseudo-differential operators as the perturbation such that some solutions exhibit $t^{\frac{s}{2}}-$polynomial growth of ${\CH}^s-$norms, and in an abstract setting, Maspero \cite{Mas2022, Mas2023} exploited further time periodic perturbations and gave sufficient conditions such that some solution exhibits such polynomial growth, using the Mourre estimate. This strategy has also been applied by Maspero
and collaborators to the 1D fractional quasilinear Schr\"odinger equation \cite{MM2024} and to completely resonant QHO on $\R^2$ \cite{LMT2025}.

For $2-$dimensional QHO, Faou-Rapha\"el \cite{FR23} constructed a time-decaying perturbation such that, after a suitable ``dilation transformation", the Sobolev norms of some solutions to the transformed equation remains bounded in time.
However, the corresponding solutions of the original system exhibit logarithmic growth in Sobolev norms. We remark that the underlying philosophy is similar to that of generalized reducibility, even though the setting and methods are different.

Based on the study in \cite{ST20} for linear Lowest Landau Level equations with a time dependent potential, Thomann \cite{Thomann20} constructed the perturbation given by the projection onto Bargmann-Fock space such that some travelling wave whose Sobolev norm presents polynomial growth with time.
In other settings, the logarithmic growth of Sobolev norms was also shown by Bourgain \cite{Bou99} for 1D and 2D linear Schr\"odinger equations with quasi-periodic potential, and by Haus-Maspero \cite{HM2020} for semiclassical anharmonic oscillators with regular time dependent potentials.

\subsubsection{Classical-quantum correspondence}

For the quadratic quantum Hamiltonian
\begin{equation}\label{orig-equ-2}
\frac{1}{{\rm i}}\partial_t \psi=(\CQ_{\CA(t)}(Z)+\CL_{\ell(t)}(Z))\psi,
\end{equation}
with $\CA(t)\in C_b^0(\R, \sp(n, \R))$ and $ \ell(\cdot)\in C_b^0(\R, \R^{2n}) $, the long-time behavior of its solutions is closely related to the classical affine system
\begin{equation}\label{orig-ODE-2}
\dot{z}=\CA(t)z+\ell(t).
\end{equation}
According to \cite[Theorem 1.4]{LLZ2025}, under the reducibility assumption on the quantum Hamiltonian (\ref{orig-equ-2}), the growth of $\CH^s-$norms for the solutions to Eq. (\ref{orig-equ-2}) is determined by the solution to the affine system (\ref{orig-ODE-2}). With generalized reducibility stated in Theorem \ref{thm-reduc}, such a classical-quantum correspondence
can be generalized by removing the reducibility assumption.

%$$\psi(t)=\CU(t)\varphi(t)=\CU(t)e^{{\rm i}t\CT }\CU(0)^{-1}\psi(0)$$

\begin{thm}\label{thmODEtoPDE}
For $s\geq 0$ and $t_0>0$, consider Eq. (\ref{orig-equ-2}) with initial condition $\psi(t_0)\in\CH^s\setminus\{0\}$. There is a constant $C>1$, depending on $\psi(t_0)$ and $z_{*}(t_0)$, such that
the corresponding solution $\psi(t)$ of Eq. (\ref{orig-equ-2}) satisfies
\begin{equation}\label{esti_Hs}
C^{-1}  \leq \frac{\|\psi(t)\|_s}{\|z_*(t)\|^s+\|W(t)\|^s}\leq C , \qquad t\geq t_0,
\end{equation}
where $z_*(t)$ is a particular solution of the affine system (\ref{orig-ODE-2}), and
$W(t)$ is the fundamental solution matrix of the linear system $\dot{z}=\CA(t)z$ satisfying
$W(t_0)=\I_{2n}$.
In particular, if $\ell(\cdot)=0$ in (\ref{orig-ODE-2}), then
\begin{equation}\label{esti_Hs-homo}
C^{-1}  \leq \frac{\|\psi(t)\|_s}{\|W(t)\|^s}\leq C , \qquad t\geq t_0.
\end{equation}
\end{thm}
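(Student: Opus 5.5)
The plan is to realize the generalized reducibility of Theorem \ref{thm-reduc} explicitly through the metaplectic representation and Weyl–Heisenberg translations, and then read off the $\CH^s$-norm from how these operators act. Write $W(t)\in \Sp(2n,\R)$ for the fundamental matrix with $W(t_0)=\I_{2n}$, and let $z_*(t)$ be the particular solution of (\ref{orig-ODE-2}) with $z_*(t_0)=0$. Any other particular solution differs from this one by $W(t)z_0$, so the two-sided bound (\ref{esti_Hs}) is insensitive to the choice, at the price of changing the constant $C$.

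\textbf{Step 1: factorization of the propagator.} By the exact Egorov/metaplectic correspondence for quadratic Hamiltonians, the propagator of (\ref{orig-equ-2}) from $t_0$ to $t$ factors as
\[
U(t,t_0)=e^{\ri\theta(t)}\,\mathcal{V}(z_*(t))\,\mathcal{M}(W(t)).
\]
Here $\mathcal{M}(W)$ is a metaplectic operator lifting $W$, $\mathcal{V}(z)=e^{-\ri\la \J_n z, Z\ra}$ (up to sign conventions) is the Heisenberg translation by $z$ in phase space, and $\theta(t)$ is a real phase. I would verify this by differentiating in $t$ and using $\mathcal{M}(W)^{-1}Z\,\mathcal{M}(W)=WZ$ together with $\mathcal{V}(z)^{-1}Z\mathcal{V}(z)=Z+z$. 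The phase is irrelevant for norms, so $\|\psi(t)\|_s=\|\mathcal{V}(z_*(t))\mathcal{M}(W(t))\psi_0\|_s$ with $\psi_0=\psi(t_0)$.

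\textbf{Step 2: reduction to a compact part and a single diagonal block.} I take an Iwasawa- or polar-type decomposition $W=K_1\Lambda K_2$, where $K_1,K_2\in \Sp(2n,\R)\cap O(2n)$ and $\Lambda=\mathrm{diag}(e^{\lambda_1},\dots,e^{\lambda_n},e^{-\lambda_1},\dots,e^{-\lambda_n})$ with $\lambda_j\ge 0$. The metaplectic lifts of $K_1,K_2$ commute with $\CT$, hence are isometries of every $\CH^s$. The lift of $\Lambda$ is a product of one-dimensional dilations $u(x)\mapsto e^{-\lambda_j/2}u(e^{-\lambda_j}x_j)$, acting in the Fourier variable by the reciprocal dilation. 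For fixed nonzero $\varphi\in\CH^s$, I will use the equivalent norm
\[
\|u\|_s\simeq \|u\|_{L^2}+\|\la x\ra^s u\|_{L^2}+\|\la\xi\ra^s\hat u\|_{L^2}
\]
from Section \ref{sec_pre}. With it, a direct scaling computation gives $\|\mathcal{M}(\Lambda)\varphi\|_s\simeq_\varphi \max_j e^{s\lambda_j}\simeq\|W\|^s$. The lower bound holds because at least one of $\|x_j^s\varphi\|$ and $\|\xi_j^s\hat\varphi\|$ is nonzero, so the expanding direction always captures some mass, and $\|W\|\ge 1$ on $\Sp$.

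\textbf{Step 3: the translation.} I write $\|\mathcal{V}(z)u\|_s$ through the equivalent norm: translation shifts $x$ by $z_x$ and $\xi$ by $z_\xi$, so
\[
\|\mathcal{V}(z)u\|_s\lesssim \|u\|_s+\|z\|^s\|u\|_{L^2}.
\]
Applying this with $u=\mathcal{M}(W)\psi_0$ gives the upper bound. For the lower bound I apply the same inequality to the inverse translation $\mathcal{V}(-z)$, which yields $\|\mathcal{V}(z)u\|_s\gtrsim \|u\|_s-C\|z\|^s\|u\|_{L^2}$. This is not enough when the two terms are comparable.

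\textbf{The main obstacle} is exactly this lower bound when $\|z_*\|^s$ and $\|W\|^s$ are of the same size and might cancel. I would handle it by a moment argument instead. The quantities $\la u,Zu\ra$ and $\la u,(Z-\la Z\ra)^2u\ra$ transform covariantly, so the centre of mass of $\psi(t)$ in phase space equals $W(t)m_0+z_*(t)$, where $m_0=\la\psi_0,Z\psi_0\ra$, and the covariance equals $W\Sigma_0W^*$ with $\Sigma_0>0$. Then $\|\psi\|_s^2\gtrsim \la\psi,|Z|^{2s}\psi\ra$, and by Jensen (for $s\ge1$), together with the centered second moment, this is $\gtrsim |W m_0+z_*|^{2s}+\|W\Sigma_0W^*\|^s$. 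Since $\|W\Sigma_0W^*\|\gtrsim\|W\|^2$, we get
\[
|Wm_0+z_*|^{s}+\|W\|^{s}\gtrsim \|z_*\|^s+\|W\|^s,
\]
because $|Wm_0|\le \|W\||m_0|$. For $0<s<1$ I would instead interpolate between $L^2$ and $\CH^1$ using the two-sided bounds already obtained. This dependence on $m_0$ and $\Sigma_0$ is why $C$ depends on $\psi(t_0)$. The case $\ell=0$ is the specialization $z_*\equiv0$, which gives (\ref{esti_Hs-homo}).
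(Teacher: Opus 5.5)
\noindent For $s\ge 1$ your argument is correct, and it takes a different route from the paper. The paper writes $\psi(t)=\rho(z_*(t))\CM(W(t))\rho(-z_*(t_0))\psi(t_0)$ and simply invokes Proposition~\ref{schandmeta}. The lower half of that proposition is exactly the cancellation you isolate. Its proof passes over it by asserting $\int|y-p|^{2s}|g(y)|^2\,dy\simeq\|p\|^{2s}\|g\|_0^2+\||X|^sg\|_0^2$. The lower half of that equivalence fails for general $g$ (take $g$ concentrated near $p$), and the paper does not justify it for $g=\CM(\mathbb A)u$. Your covariance argument handles this properly when second moments exist. Exact Egorov gives mean $Wm_0+z_*$ and covariance $W\Sigma_0W^*$. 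Coordinatewise Jensen then gives $\|\psi(t)\|_s^2\gtrsim(|Wm_0+z_*|^2+\mathrm{tr}\,W\Sigma_0W^*)^s$. Finally, $\Sigma_0>0$ together with $|Wm_0|\le\|W\|\,|m_0|$ closes it.

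\noindent The gap is $0<s<1$, which the statement includes; your proposed interpolation does not cover it. If $\psi(t_0)\in\CH^s\setminus\CH^1$, then $\|\psi(t)\|_1=\infty$, and $\Sigma_0$ is undefined (for $s<\frac12$ even $m_0$ is). Even for smooth data, $\|u\|_s\le\|u\|_0^{1-s}\|u\|_1^{s}$ bounds $\|u\|_s$ from \emph{above}. A lower bound by interpolation would need an upper bound on some $\|u\|_r$ with $r>1$, i.e.\ more regularity than assumed. Approximating $\psi(t_0)$ by smoother data does not help, because the resulting constants degenerate. Jensen also reverses for $s<1$, so the variance cannot control $2s$-moments. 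What you need instead is a translation-invariant spread in the expanding direction, which your Step 2 already sets up. Use $\mathcal V(z)\CM(K_1)=\CM(K_1)\mathcal V(K_1^{-1}z)$ and the $\CH^s$-isometry of $\CM(K_1)$. Take (say) $x_1$ as the coordinate stretched by $e^{\lambda_1}$, $\lambda_1=\max_j\lambda_j$, and set $g=\CM(K_2)\psi(t_0)$. Then, for some $c\in\R$ depending on $z_*(t)$ and $\lambda_1$,
\[
\|\psi(t)\|_s\gtrsim\big\||X_1|^s\,\mathcal V(K_1^{-1}z_*(t))\CM(\Lambda)g\big\|_0=e^{s\lambda_1}\Big(\int_{\R^n}|y_1-c|^{2s}|g(y)|^2\,dy\Big)^{1/2}.
\]
The family $\{\CM(K)\psi(t_0)\}_K$ is compact in $L^2$, so the $y_1$-marginals of $|\CM(K)\psi(t_0)|^2$ are uniformly integrable. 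Hence $\inf_{c,K}\int|y_1-c|^{2s}|\CM(K)\psi(t_0)(y)|^2\,dy>0$, which gives $\|\psi(t)\|_s\gtrsim\|W(t)\|^s$ uniformly in $z_*$. Combined with $\|\mathcal V(z)u\|_s\gtrsim\|z\|^s\|u\|_0-C\|u\|_s$, this yields the lower bound in every regime and for every $s>0$. The same compactness over $K_2(t)$ is also needed, though unstated, in your Step 2 lower bound, where $\varphi=\CM(K_2(t))\psi(t_0)$ varies with $t$.
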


Since Eq. (\ref{orig-equ-2}) is not reducible in general, one cannot compute the growth of Sobolev norms by passing to a normal form. In particular, the lack of reducibility prevents us from conjugating the equation to a time-independent Hamiltonian for which the norm growth can be read off directly.
Generalized reducibility overcomes this difficulty by allowing conjugations that are unitary on $L^2$ but not necessarily bounded on $\CH^s$, thereby still making it possible to determine the growth of Sobolev norms.

The main idea in the proof of Theorem \ref{thmODEtoPDE} is precisely to exploit this generalized conjugation framework in order to transfer the growth mechanism from the transformed equation back to the original one.
The difference between normal reducibility and generalized reducibility can be summarized as follows.
$$
\begin{tabular}{|c|l|l|l|}
\hline
$\psi(t)=\CU(t)\varphi(t)$ & Normal reducibility & Generalized reducibility \\[2mm]
 \hline
$\CU(t)$ on $L^2$  & unitary & unitary \\[2mm]
\hline
$\CU(t)$ on $\CH^s$  & bounded &  may be unbounded \\[2mm]
\hline
$\|\varphi(t)\|_s$ & may be unbounded &  bounded \\[2mm]
\hline
$\|\psi(t)\|_s$  & determined by $\|\varphi(t)\|_s$ & determined by $\CU(t)$ on $\CH^s$ \\[2mm]
\hline
Realizability &  Not always realizable &  Realizable (Theorem \ref{thm-reduc}) \\[2mm]
\hline
\end{tabular}
$$

Moreover, generalized reducibility differs essentially from normal reducibility in the {\it diversity of possible growth behaviors} of Sobolev norms.
In \cite[Theorem 1.2]{LLZ2025}, we obtained a complete classification of the growth rates of Sobolev norms for Eq.~(\ref{orig-equ-2}) when the system is reducible in the normal sense.
In contrast, under generalized reducibility, solutions may exhibit a much richer variety of behaviors: one can realize many prescribed sub-exponential growth rates, either monotone or oscillatory, of Sobolev norms.
This phenomenon will be stated precisely in Theorems \ref{thmGR-mono} and \ref{thmGR-osci} in the next section.

\subsubsection{Construction for prescribed growth rate}

According to Theorem~\ref{thmODEtoPDE}, since the growth of Sobolev norms for quadratic quantum Hamiltonians can be read off from the corresponding classical linear system, it is possible to construct quantum Hamiltonians whose solutions exhibit prescribed growth rates in Sobolev spaces.

\medskip

Given some $t_0>0$, let $\CM([t_0,\infty[)$ be the class of $C^2$ non-decreasing functions on $[t_0,\infty[$ satisfying $\displaystyle \inf_{t\geq t_0} f(t) > 0$, and
$$f(t)\to \infty ,\quad \left(\frac{f'}{f}\right)(t) \to 0  \ {\rm monotonously}, \quad t\to\infty. $$
It is easy to verify that, for suitable $t_0$, $\CM([t_0,\infty[)$ includes a large class of sub-exponential growth rates such as
\begin{equation}\label{examp-M}
\begin{array}{lll}
e^{\ln(t)^a} \ (a>0), & e^{{\rm li}(t)}, & e^{t^{\sigma}} \ (0<\sigma<1),\\[3mm]
 t, & \frac{t}{\ln(t)}, & \underbrace{\ln\ln\cdots\ln}_{k \ {\rm times}}(t) \ (k\in\N^*).
\end{array}\end{equation}
Moreover, if $f\in\CM([t_0,\infty[)$, then $f^\lambda\in\CM([t_0,\infty[)$ for any $\lambda>0$.

\begin{thm}\label{thmGR-mono}
For $f\in \CM([t_0,\infty[)$, there is $\phi_f\in C^2([t_0,\infty[,\R)$ with
\begin{equation}\label{decaying-phi}
\phi_f(t)\to 0 \quad as \ \ \ t\to\infty,  \quad   if  \ \ \  \frac{f''(t)}{f(t)}\to 0,
\end{equation}
 such that the following holds for any given $s>0$.
 \begin{enumerate}
   \item For the 1D homogeneous quadratic Hamiltonian PDE
\begin{equation}\label{QHe-abs}
\frac1\ri\partial_t u=\frac12\left(D^2 + (1+\phi_f(t))X^2\right) u,
\end{equation}
with $u(t_0)\in \CH^s\setminus\{0\}$,
there exists a constant $c_0>1$ such that the solution $u=u(t)$
satisfies that
\begin{equation}\label{growth}
c_0^{-1}f(t)^{\frac{s}{2}}\leq \|u(t)\|_{s} \leq c_0f(t)^{\frac{s}{2}},\quad t\geq t_0.
\end{equation}
   \item If $f\in \CM([t_0,\infty[)$ satisfies that $f(t)=o(t^2)$ as $t\to\infty$, and there exist $0<\kappa<2$ such that
   \begin{equation}\label{condisupp1}
t\left(\frac{f'}{f}\right)(t)\leq \kappa,  \quad  t\left(\frac{f'}{f}\right)(t)\  { is \ decreasing},\quad t\geq t_0,
\end{equation}
then, for the 1D non-homogeneous quadratic Hamiltonian PDE
\begin{equation}\label{QHe-abs-nonhomo}
\frac1\ri\partial_t u=\frac12\left(D^2 + (1+\phi_f(t))X^2\right) u+a\sin(t) X u,\quad a\in\R^*,
\end{equation}
with $u(t_0)\in \CH^s\setminus\{0\}$,
there exists a constant $c_1>1$ such that,
\begin{equation}\label{growth-lin}
c_1^{-1}t^s\leq \|u(t)\|_{s} \leq c_1t^s,\quad t\geq t_0.
\end{equation}
\end{enumerate}
\end{thm}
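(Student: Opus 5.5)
The plan is to reduce everything to the classical linear system through Theorem~\ref{thmODEtoPDE}. The classical system attached to (\ref{QHe-abs}) is Hill's equation $\ddot x+(1+\phi_f(t))x=0$, written as a first-order system for $z=(\xi,x)$. By (\ref{esti_Hs-homo}), part (1) amounts to constructing $\phi_f$ whose fundamental matrix satisfies $\|W(t)\|\simeq f(t)^{1/2}$ uniformly for $t\geq t_0$. For part (2), (\ref{esti_Hs}) reduces the claim to showing $\|z_*(t)\|+\|W(t)\|\simeq t$ for a particular solution of the forced equation $\ddot x+(1+\phi_f)x=-a\sin t$ (up to the sign convention of (\ref{orig-ODE-2})). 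Since $f=o(t^2)$ and $\|W\|\simeq f^{1/2}=o(t)$, it is enough to prove $\|z_*(t)\|\simeq t$.

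\emph{Construction of $\phi_f$.} I would use the Ermakov--Pinney (amplitude--phase) representation. Suppose $\rho>0$ satisfies $\ddot\rho+(1+\phi)\rho=\rho^{-3}$, and set $\theta(t)=\int_{t_0}^t\rho^{-2}$. Then $x_1=\rho\cos\theta$ and $x_2=\rho\sin\theta$ are solutions with Wronskian $1$. For $\phi=0$, every function $\rho^2=A\cos^2t+A^{-1}\sin^2t$ with $A>0$ constant solves the Pinney equation. This suggests letting $A$ vary slowly:
$$\rho(t)^2:=f(t)\cos^2t+f(t)^{-1}\sin^2t,\qquad \phi_f:=\rho^{-4}-1-\frac{\ddot\rho}{\rho}.$$
With this choice the equation holds by definition. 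The task is then to express $\phi_f$ through $g=f'/f$ and $f''/f$. Writing $\rho^2=f\,(\cos^2t+f^{-2}\sin^2t)$, all terms with $g=0$ cancel exactly, because that is the constant-$A$ case. Each remaining term carries a factor $g$, $g^2$, $g'$ or $f''/f$, multiplied by ratios such as $f\cos^2 t/\rho^2$ or $f^{-1}\sin^2 t/\rho^2$, which are bounded by $1$.

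The delicate point, and the step I expect to be the main obstacle, is near the zeros of $\cos t$, where $\rho\sim f^{-1/2}$ is small. There one must check that no term carries an unbounded factor such as $\rho^{-2}f^{-1}\cdot f$ without a matching $\cos^2 t$. I would first group terms as derivatives of $\log(\cos^2t+f^{-2}\sin^2t)$, which are controlled by $f^{-2}\cdot g$ divided by that same quantity. If the bound fails, I would smooth the phase by replacing $t$ by a slowly corrected phase $\vartheta(t)$. Regularity of $\phi_f$ should be treated with the same care, since $\ddot\rho$ already involves $f''$ (possibly using the $C^2$ hypothesis together with a mollified version of $f$ if needed). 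Boundedness of $\phi_f$ gives well-posedness, and $\phi_f\to0$ follows when $f''/f\to0$, using the monotone convergence $g\to0$.

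\emph{Norm of $W$.} From $x_1,x_2$ and their derivatives, the symplectic matrix $W(t)$ (normalized at $t_0$ by a fixed constant symplectic factor) has $\|W\|^2\simeq\rho^2+\dot\rho^2+\rho^{-2}$. Since $\rho^2\le f$, $\rho^{-2}\le f$, and $\dot\rho$ is controlled by the previous step, we get $\|W\|\lesssim f^{1/2}$. Since $\max(\rho^2,\rho^{-2})\geq \tfrac12 f$ at every time, we also get $\|W\|\gtrsim f^{1/2}$. This proves (\ref{growth}).

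\emph{Part (2).} By variation of constants,
$$x_*(t)=a\Big(x_1(t)\int_{t_0}^t x_2\sin\tau\,d\tau-x_2(t)\int_{t_0}^t x_1\sin\tau\,d\tau\Big).$$
For constant $A$ the phase is explicit, $\theta=\arctan(A^{-1}\tan t)$, and $x_2=A^{-1/2}\sin t$, so the resonant integrals grow linearly in $t$ and the factors $A^{\pm1/2}$ cancel. In the slowly varying case I would show that $x_2(\tau)\sin\tau$ equals $\tfrac12 f(\tau)^{-1/2}$ plus an oscillating term, and similarly for $x_1$ with the factor $f^{1/2}$. Integrating by parts, the oscillatory parts contribute $O(f^{1/2}+f^{-1/2})=o(t)$. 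The nonoscillating parts, after multiplication by $\rho$, combine to a quantity of size $\simeq t$ (e.g.\ $f(t)^{1/2}\int^t f^{-1/2}$), uniformly in the phase. Hypothesis (\ref{condisupp1}), namely $tg\le\kappa<2$ with $tg$ decreasing, is exactly what makes $f(t)^{1/2}\int_{t_0}^t f^{-1/2}\simeq t$, since $f^{-1/2}$ decays slower than $\tau^{-1}$. This gives $\|z_*(t)\|\simeq t$, and then (\ref{esti_Hs}) yields (\ref{growth-lin}).
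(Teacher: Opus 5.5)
Your reduction to the classical system via Theorem~\ref{thmODEtoPDE} is the same as the paper's. The gap is the construction of $\phi_f$: it fails at exactly the delicate point you flagged, so the proposal as written does not prove the statement.

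Put $g=f'/f$ and $P=f^2\cos^2t+\sin^2t$, so that $\rho^2=P/f$. The frozen-coefficient terms do cancel, but a direct computation gives
\[
\phi_f=\tfrac12 g'-\tfrac14 g^2-(g'+g^2)\frac{f^2\cos^2t}{P}+g^2\frac{f^4\cos^4t}{P^2}+\frac{g\cos t\sin t}{P}\Big(3f^2+1-2(f^2-1)\frac{f^2\cos^2t}{P}\Big),
\]
and the last term is not small. Consider the layer $|\cos t|\approx 1/f(t)$ around the zeros of $\cos t$, where $\rho^2\approx 2/f$ varies on the time scale $1/f$, and set $y=f(t)\cos t$. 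There the last term equals $f'(t)\,\mathrm{sgn}(\sin t)\,\frac{y(y^2+3)}{(y^2+1)^2}(1+o(1))$, which has modulus $\approx f'(t)$ at $y=\pm1$. All the other terms are $O(|g'|+g^2)$.

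The bad term comes from the cross term $-4f'\cos t\sin t$ in $\frac{d^2}{dt^2}\rho^2$ (one derivative on $f$, one on $\cos^2t$), and the $(\dot\rho/\rho)^2$ contribution does not cancel it. Consequences:
\begin{itemize}
\item For $f(t)=t\in\CM$, where $f''/f\equiv0$, your $\phi_f$ does not tend to $0$, so (\ref{decaying-phi}) fails.
\item For $f=t^{3/2}$ (admissible even in part (2)) or $f=e^{t^\sigma}$, your $\phi_f$ is unbounded, so Theorem~\ref{thmODEtoPDE}, stated for bounded coefficients, does not apply directly.
\end{itemize}
Your fallbacks do not repair this. Replacing $t$ by a phase $\vartheta(t)$ reproduces the cross term as $-4f'\dot\vartheta\cos\vartheta\sin\vartheta$, and mollifying $f$ does not make $f'$ small.

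The missing idea is to let the amplitude vary only through $\frac{f'}{f}\cos^2t$, which vanishes at the zeros of $\cos t$. Concretely, replace $f$ by $H_f^2$ with $H_f(t)=\exp\int_{t_0}^t\frac{f'}{f}\cos^2\tau\,d\tau$. Dirichlet's test applied to $\int\frac{f'}{f}\cos 2\tau$ still gives $H_f\simeq f^{1/2}$. In your Ermakov--Pinney language, taking $\rho^2=H_f^2\cos^2t+H_f^{-2}\sin^2t$ gives the offending term an extra factor $\cos^2t$, which makes it $O(f'/f)$.

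The paper does this more directly. It sets $\phi_f=-g^2\cos^4t-g'\cos^2t+4g\cos t\sin t$, which is exactly the condition for $\cos(t)H_f(t)$ to solve $\xi''+(1+\phi_f)\xi=0$. Then:
\begin{itemize}
\item $|\phi_f|\le g^2+|g'|+2g$, which tends to $0$ when $f''/f\to0$;
\item the second solution comes from reduction of order, and (\ref{condf3}) controls its extra term;
\item together these give $\|W(t)\|\simeq f(t)^{1/2}$.
\end{itemize}

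Once the construction is fixed, your part (2) heuristic is essentially the paper's Appendix: the main term $f(t)^{1/2}\int_{t_0}^t f^{-1/2}\simeq t$ via $tg\le\kappa<2$, with oscillatory remainders $O(f^{1/2})$.

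A smaller slip: the claim $\max(\rho^2,\rho^{-2})\ge\frac12 f$ is false, since at $\cos^2t=1/f$ both quantities are $\approx1$. At such times the lower bound on $\|W\|$ must come from $\dot\rho^2\approx f$.
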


\begin{remark}
It is easy to verify that, for suitable $t_0>0$, there exist many non-decreasing functions in $\CM([t_0,\infty[)$ that grow to $+\infty$ slower than $t^2$ and satisfy (\ref{condisupp1}). Typical examples include
$$f(t)=\mu t^\alpha \ln^\beta (t),\quad  \mu>0,\;\ 0\leq \alpha<2,\;\ \beta\geq 0. $$
The assertion (2) in Theorem \ref{thmGR-mono} shows that a large class of monotone $o(t^s)-$growth behaviors of the $\CH^s-$norm obtained for the homogeneous equation~\eqref{QHe-abs} can be improved to $t^s-$growth by means of an order $1$ perturbation.
It was also shown by Bambusi-Gr\'ebert-Maspero-Robert \cite[Corollary A.2]{BGMR2018} that boundedness of the $\CH^s-$norm for the homogeneous equation~\eqref{QHe-abs} can be upgraded to $t^s-$growth via such an order $1$ perturbation.
\end{remark}

Besides the non-decreasing growth, it is also possible to construct oscillatory growth of Sobolev norms. As an example, we have

\begin{thm}\label{thmGR-osci}
For the function
\begin{equation}\label{oscillatory_f}
f(t)= t^\frac13\left(1+\ln(t) \sin^2\left(\sqrt{t}\right)\right),\quad t\geq e,
\end{equation}
there exists $\phi_f\in C^2([e,\infty[,\R)$ with
\begin{equation}\label{decay-oscillatory_f}
\phi_f(t)\to 0 \quad as \ \ \ t\to\infty,
\end{equation}
 such that, for any $s>0$, any initial condition $u(t_0)\in \CH^s\setminus\{0\}$, there exists a constant $c_0>1$ such that (\ref{growth}) holds for the solution $u(t)$ to Eq. (\ref{QHe-abs}).
In particular,
$$\liminf_{t\to\infty}\frac{\|u(t)\|_{\CH^s}}{t^\frac{s}{6}}, \quad \limsup_{t\to\infty}\frac{\|u(t)\|_{\CH^s}}{t^\frac{s}{6}\ln(t)^\frac{s}{2}}\in [c_0^{-1},   c_0]. $$
\end{thm}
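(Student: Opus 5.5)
By Theorem \ref{thmODEtoPDE} with $\ell=0$, it suffices to construct $\phi_f\in C^2([e,\infty[,\R)$ with $\phi_f(t)\to0$ such that the fundamental matrix $W(t)$ of the Hill-type system
$$\dot\xi=-(1+\phi_f(t))x,\qquad \dot x=\xi,$$
that is, of $\ddot x+(1+\phi_f)x=0$, satisfies $\|W(t)\|^2\asymp f(t)$. Indeed, (\ref{esti_Hs-homo}) then gives $\|u(t)\|_s\asymp \|W(t)\|^s\asymp f(t)^{s/2}$, which is (\ref{growth}). The $\liminf/\limsup$ statement follows by evaluating $f$ along $\sin^2(\sqrt t)=0$, where $f=t^{1/3}$, and along $\sin^2(\sqrt t)=1$, where $f\sim t^{1/3}\ln t$, together with $f\ge t^{1/3}$ everywhere.

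To build $\phi_f$ I would use an inverse, WKB-type ansatz as in the monotone case. Set $g=f^{1/2}$ and look for an explicit solution $x_1(t)=g(t)\cos\theta(t)$ of $\ddot x+(1+\phi)x=0$, together with a second solution $x_2=g^{-1}\sin\theta$ or, more precisely, one fixed by the Wronskian. Since $W$ is symplectic with $\det W=1$, we have $\|W\|\asymp\|W^{-1}\|$. So if one solution has amplitude $\asymp g$ and the Wronskian normalizes the other to amplitude $\asymp g^{-1}$, the columns of $W$ have size $\asymp g$ and $\|W\|\asymp g$. Concretely, I would use the Ermakov--Pinney/Milne representation. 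Pick $\rho>0$ and define
$$1+\phi=\rho^{-4}-\ddot\rho/\rho .$$
Then $\rho\cos\theta$ and $\rho\sin\theta$ with $\dot\theta=\rho^{-2}$ are solutions, and $\|W(t)\|\asymp\max(\rho,\rho^{-1},|\dot\rho|)$ up to constants fixed by initial data. Taking $\rho=g=f^{1/2}$ would make $\theta$ slow, which is not what we want. Instead I would use the symplectic-ellipse picture: the image under $W(t)$ of the unit circle is an ellipse with semi-axes $\lambda(t)$ and $\lambda(t)^{-1}$ rotating at angular rate $\approx1$. I would prescribe the Hamiltonian flow as $W(t)=R(\theta(t))\,\mathrm{diag}(\lambda,\lambda^{-1})\,R(t)$ with $\lambda=f^{1/2}$. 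Then $\dot W W^{-1}$ must lie in $\sp(1,\R)$ of the special form $\begin{pmatrix}0&-(1+\phi)\\1&0\end{pmatrix}$, and matching the entries gives two ODE constraints determining $\theta$ and $\phi$ in terms of $\lambda$ and $\dot\lambda/\lambda=\tfrac12 f'/f$. Equivalently, in Milne form, I would choose $\rho=\lambda^{\cos\text{-modulated}}$-type oscillating amplitude, $\rho^2=\lambda^2\cos^2\theta+\lambda^{-2}\sin^2\theta$ with $\dot\theta\approx1$. A direct computation then expresses $\phi$ through $f'/f$ and $f''/f$, multiplied by bounded trigonometric factors, plus $(f'/f)^2$ terms.

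The main obstacle is the decay $\phi_f\to0$ for the oscillatory $f$, where $f'/f$ is not monotone. Writing $f=t^{1/3}(1+\ln t\sin^2\sqrt t)$, I compute $f'/f=\tfrac1{3t}+\frac{\ln t\,\sin(2\sqrt t)/(2\sqrt t)+\sin^2\sqrt t/t}{1+\ln t\sin^2\sqrt t}$. The delicate regime is near the zeros of $\sin\sqrt t$, where the denominator is about $1$ and the numerator is about $\ln t\,|\sin\sqrt t|/\sqrt t$. Because $|\sin\sqrt t|\lesssim (1+\ln t\sin^2\sqrt t)^{1/2}/\sqrt{\ln t}$, this gives $f'/f=O(\sqrt{\ln t}/\sqrt t)\to0$. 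Differentiating again, $f''/f$ picks up $\ln t\cos(2\sqrt t)/(2t)$ over the denominator, which is $O(\ln t/t)\to0$. So $f'/f$, $(f'/f)^2$ and $f''/f$ all tend to $0$, hence $\phi_f\to0$, and $\phi_f\in C^2$ requires checking $f'''$, which is routine. The remaining care is to verify that the phase equation for $\theta$ has a global solution with $\dot\theta\to1$, and that the constants in $\|W\|\asymp\lambda$ are uniform, because the defect terms are $o(1)$. Here one must check that the coupling terms in $\dot WW^{-1}$, which scale like $\dot\lambda/\lambda\cdot\lambda^{\pm2}$, are cancelled exactly by the construction rather than merely estimated. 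This exact cancellation is why I would define $\phi$ by the inverse construction instead of by perturbation.
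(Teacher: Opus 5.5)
There is a genuine gap in the construction of $\phi_f$, which is the whole content of the theorem. Your reduction via Theorem \ref{thmODEtoPDE} (everything comes down to $\phi_f\to0$ and $\|W(t)\|\simeq f(t)^{1/2}$) is fine, and so is your $\liminf/\limsup$ argument.

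\textbf{The rotating-ellipse ansatz fails.} Take $W=R(\theta)\Lambda R(t)$ with $\Lambda=\mathrm{diag}(\lambda,\lambda^{-1})$. Write $J=\begin{pmatrix}0&-1\\1&0\end{pmatrix}$, $R(\theta)=e^{\theta J}$ and $\mu=\dot\lambda/\lambda$. Then
\[
\dot W W^{-1}=\dot\theta J+R(\theta)\Big[\mu\begin{pmatrix}1&0\\0&-1\end{pmatrix}+\Lambda J\Lambda^{-1}\Big]R(\theta)^{-1}.
\]
The symmetric part of $\Lambda J\Lambda^{-1}$ is $\frac{\lambda^{-2}-\lambda^{2}}{2}\begin{pmatrix}0&1\\1&0\end{pmatrix}$. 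Neither the conjugation by $R(\theta)$ nor the antisymmetric term $\dot\theta J$ can remove it. The symmetric part of the target matrix is $-\frac{\phi}{2}\begin{pmatrix}0&1\\1&0\end{pmatrix}$, so comparing norms gives $|\phi|\ge\lambda^2-\lambda^{-2}=f-f^{-1}\to\infty$.

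More fundamentally, no construction can give $\|W(t)\|=f(t)^{1/2}$ exactly. For $\dot z=\begin{pmatrix}0&-(1+\phi)\\1&0\end{pmatrix}z$ one has
\[
\frac{d}{dt}\ln\|W(t)\|=-\frac{\phi(t)}{2}\sin 2\alpha(t),
\]
where $\alpha$ is the angle of the major axis of $W(t)$. Once $\|W\|$ is large and $\phi$ small, this axis turns at rate close to $1$. Exact matching would then force $\phi=-(f'/f)/\sin2\alpha$, which blows up at the zeros of $\sin 2\alpha$. So the norm can only be matched up to constants, and controlling those constants is exactly where the oscillation of $f$ enters.

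\textbf{The Milne variant is not equivalent, and your formula for its $\phi$ is wrong.} Its phase satisfies $\dot\Theta=\rho^{-2}$, which oscillates between orders $f^{-1}$ and $f$, so it is not an angle with $\dot\theta\approx1$. With the natural choice $\rho^2=P:=f\cos^2t+f^{-1}\sin^2t$ and $\mu=f'/(2f)$, an exact computation gives
\[
\phi=2\mu\sin t\cos t\,\frac{f^2\cos^2t+f^{-2}\sin^2t+3}{P^2}+O\big(|\dot\mu|+\mu^2\big).
\]
Near the zeros of $\cos t$, the coefficient of $\mu$ is of order $f$. Hence this term has size comparable to $|f'|$, not $|f'/f|$. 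These are exactly the $\mu\lambda^{2}$ coupling terms you flagged at the end, and they do not cancel. Checking $f'/f,\ f''/f\to0$ therefore proves nothing: your argument would equally ``show'' decay for $f=t$, where this $\phi$ does not decay.

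For the given $f$ this route can be rescued. Indeed $f'=O(t^{-1/6}\ln t)\to0$, and the Milne fundamental matrix has squared Hilbert--Schmidt norm $\rho^2+\rho^{-2}+\dot\rho^2=f+f^{-1}+O(|f'|+f'^2/f)$. But the decisive input $f'\to0$ is absent from your proposal.

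\textbf{What the paper does instead.} It prescribes a single explicit solution $\xi_1=\cos(t)H_f(t)$, with $H_f$ as in (\ref{Hft}). Because $\dot H_f/H_f=(f'/f)\cos^2t$ vanishes exactly where $\cos t=0$, the coefficient $\phi_f=-\xi_1''/\xi_1-1$ is the formula (\ref{phif}). That formula combines $f'/f$, $(f'/f)'$ and $(f'/f)^2$ with bounded coefficients, so its decay is routine.

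The real work, which your proposal never addresses, comes next. One needs $H_f\simeq f^{1/2}$, and one needs the reduction-of-order solution (\ref{x_2}) to be $O(f^{1/2})$. Both hold only under the bounded oscillatory integrals (\ref{condf2})--(\ref{condf3}). For this non-monotone $f'/f$, the paper obtains them by repeated integration by parts (Lemma \ref{lem-osci-int}). For $f'/f$ the lemma must be applied with $N=2$ rather than $N=1$, since $f'/f$ has infinite total variation here.

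A corrected Milne argument would avoid these integrals for this particular $f$. It works only because $f'\to0$, which fails for much of the class covered by Theorem \ref{thmGR-mono}.
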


Theorems \ref{thmGR-mono} and \ref{thmGR-osci} will be proved in Sections \ref{sec-GRmono} and \ref{sec-GRosci}, respectively, by verifying the hypotheses of an abstract result (Theorem \ref{thmGr}). The main idea is to transfer the computation to the associated affine system, made possible by generalized reducibility.

\subsection*{Acknowledgements}
Z. Zhao thanks the hospitality of School of Mathematics at Nanjing University during his stay in 2025, funded by NJU International Fellowship Initiative and NJU International Research Seed Fund. He also thanks the hospitality of Key Laboratory Senior Visiting Scholarship at Fudan University during his stay in 2025.

\section{Preliminaries}\label{sec_pre}
In this section, we recall the definitions of Schr\"odinger representation and Metaplectic representation, and their fundamental properties.

%{\clr We  emphasize all these definitions are different with \cite{Fol1989} and \cite{LLZ2025}}. 					

%\subsection{Sobolev space, Sobolev norm and Fourier transform}\label{sec_Sobolev}
%With the $n-$D QHO in (\ref{nD-QHO}), the {\it Sobolev space} (also referred to as the {\it $\CH^s$-space} for a given $s \geq 0$) is defined as
%\begin{equation*}\label{Sobo_space-n}
%\CH^s(\R^n):=\left\{f\in L^2(\R^n):\CT^{\frac{s}{2}} f \in L^2(\R^n)\right\},\quad s\geq 0,
%\end{equation*}
%equipped with the graph norm, known as the {\it Sobolev norm} or {\it $\CH^s$-norm}:
%\begin{equation*}\label{Sobo_norm-n}
%\|f\|_{\CH^s(\R^n)}:=\left(\|\CT^{\frac{s}{2}} f\|^2_{L^2(\R^n)}+\|f\|^2_{L^2(\R^n)}\right)^{\frac12}.
%\end{equation*}
%{\clr and for $s<0$, $\CH^{s}(\R^n)=(\CH^{-s}(\R^n))'$ by duality. For convenience, we sometimes denote $L^2$ and $\CH^s$ instead of $L^2(\R^n)$ and $\CH^s(\R^n)$ respectively,
%when the dimension of the base space is explicitly stated, and we abbreviate $\|\cdot\|_{\CH^s}$ as $\|\cdot\|_{s}$.}

Define the {\it Fourier transform} ${\CF}$  on $L^2(\R^n)$ and its inverse ${\CF}^{-1}$ as follows
\begin{eqnarray*}({\CF}f)(\xi) \, = \, \hat{f}(\xi)&=& \int_{\mathbb R^n} e^{- {\rm i}\la x, \xi\ra}  f(x) \, dx, \qquad f\in L^2(\R^n), \\
({\CF}^{-1}\hat{f})(x) \, = \, f(x)&=& \frac{1}{(2\pi)^n}\int_{\R^n}e^{{\rm i }\la x, \xi\ra} \hat{f}(\xi) \,  d\xi, \qquad \hat{f}\in L^2(\R^n).
\end{eqnarray*}
According to \cite{FR23}, for $u \in \CH^s$, the equivalent forms of the $\CH^s-$norm (\ref{defiSoboNorm}) can be expressed as follows:
\begin{eqnarray*}\label{Sobo-equi1}
\|u\|_{s} &\simeq &\| \langle D\rangle^s u \|_{L^2} + \|\langle X\rangle^s u \|_{L^2}=\| \langle X\rangle^s \hat{u} \|_{L^2} + \|\langle X\rangle^s u \|_{L^2} \\
&\simeq& \|u \|_{L^2}+ \||X|^s \hat{u} \|_{L^2} + \||X|^s u \|_{L^2},
\end{eqnarray*}
where the implicit constants in ``$\simeq$" depend only on the dimension $n$ and the regularity index $s$.

\subsection{Schr\"odinger representation}\label{Section_schro-repre}

Let ${\bf H_n}$ be the real $(2n+1)-$dimensional {\it Heisenberg group}, which is $\R^{2n+1}$ equipped with the group law
$$(p, q, t) (p', q', t') =\left(p+p', q+q', t+t'+\frac12(\la p,q' \ra- \la q, p'\ra)\right),$$
where $p, q,p',q' \in \R^{n}$ and  $t,t'\in \R$. The map $\rho$, from ${\bf H_n}$ to the group of unitary operators on $L^2(\R^n)$, is defined as
$$\rho(p,q, t) =e^{{\rm i}(\la p,D\ra+\la q, X\ra+t\Id)} = e^{ {\rm i }t} e^{{\rm i}(\la p,D\ra+\la q, X\ra)}, \quad (p,q, t)\in {\bf H_n},$$
that is, for $u\in L^2(\R^n)$,
\begin{eqnarray}\label{rhode}
\left(\rho(p,q,t) u\right)(x)=e^{{\rm i}t+ {\rm i}\la q, x\ra + \frac{\rm i}{2} \la p,q\ra} u(x+p),
\end{eqnarray}
$\rho$ then becomes an irreducible unitary representation of ${\bf H_n}$ on the Hilbert space $L^2(\R^n)$, known as the {\it Schr\"odinger representation}. Since the variable $t$ always acts in a straightforward manner, it is often convenient to omit it entirely.

The {\it Schr\"odinger representation} $\rho$ is defined from $\R^{2n}$ to $\CB(L^2)$. For $p,q\in\R^n$, it can be explicitly formulated as
\begin{equation}\label{SchroRep-L2}
\left(\rho(v)u\right)(x) = e^{ {\rm i}\la q, x\ra +\frac{\rm i}{2} \la p,q\ra} u(x+p),\quad v=\begin{pmatrix} p \\ q \end{pmatrix}, \quad u\in L^2(\R^n).\end{equation}
It is easy to verify that $\rho(v)$ is $L^2-$unitary and belongs to $\CB(\CH^s)$ for any $s\geq 0$. Moreover, for $p_1,q_1,p_2,q_2 \in \R^n$, a straightforward computation shows that
\begin{equation}\label{schrepmult}
\rho\begin{pmatrix} p_1 \\ q_1 \end{pmatrix}\rho\begin{pmatrix} p_2 \\ q_2\end{pmatrix}=e^{\frac{\rm i}{2}(\la p_1,q_2\ra -\la q_1,p_2\ra)}\rho\begin{pmatrix} p_1+p_2 \\ q_1+q_2
\end{pmatrix},
\end{equation}
which implies
$\rho(v)^{-1}=\rho(-v)$ for any $v\in \R^{2n}$.

\subsection{Metaplectic representation}\label{Section_meta-repre}

The {\it symplectic group}, denoted by $\Sp(n,\R)$, comprises $2n\times 2n$ real matrices that preserve the symplectic form. Specifically, a matrix $\A= \begin{pmatrix} A & B\\ C& F \end{pmatrix}$ belongs to $\Sp(n,\R)$, with $A$, $B$, $C$, and $F$ being its four $n\times n$ blocks, if and only if the following conditions are satisfied:
$$A^{*} C= C^{*} A,\quad  B^{*}F= F^{*} B,\quad A^* F- C^* B =\I_n.$$
Such a matrix $\A\in \Sp(n,\R)$ is referred to as a {\it symplectic matrix}.

The {\it symplectic Lie algebra}, $\sp(n, \R)$, consists of all matrices $\CA\in {\rm gl}(2n,\R)$ such that $e^{t\CA} \in \Sp(n,\R)$ for every $t\in \R$. In other words, $\CA= \begin{pmatrix} A & B\\ C& F \end{pmatrix} \in \sp(n, \R)$, with $A$, $B$, $C$, and $F$ as the four $n\times n$ blocks of $\CA$, if and only if the following hold:
$$F=-A^*, \quad B=B^{*}, \quad C=C^*. $$
A matrix $\CA\in \sp(n,\R)$ is known as a {\it Hamiltonian matrix}.
		
%Let ${\bf Q}$ represent the space of real homogeneous quadratic polynomials on $\R^{2n}$, equipped with the {\it Poisson bracket}. Specifically, for smooth functions $Q_1$ and $Q_2$ on $\R^{2n}$, the Poisson bracket is defined as
%$$\left\{Q_1(\xi, x), Q_2(\xi, x)\right\}:= \sum\limits_{j=1}^n \left( \frac{\partial Q_1}{\partial \xi_j} \frac{\partial Q_2}{\partial x_j}-\frac{\partial Q_1}{\partial x_j} \frac{\partial Q_2}{\partial \xi_j}\right). $$
%\begin{prop}(Proposition 4.42 in \cite{Fol1989}) The mapping $\CA\mapsto \CQ_{\CA}(\xi, x)$ constitutes a Lie algebra isomorphism from $sp(n,\R)$ to ${\bf Q}$.
%\end{prop}
%
%\begin{cor}
%If $\sigma$ or $\tau$ is a polynomial of degree smaller than 2, then
%$$[\sigma(D, X), \tau(D, X)]=\frac{1}{\rm i}\{\sigma, \tau\}(D,X), $$
%where we define the Weyl quantization to be operator $\sigma(D, X)$ acting on $f\in L^2(\R^n)$ by the formula
%$$\sigma(D, X)f =\frac{1}{(2\pi)^n}\int_{\R^n}\int_{\R^n} e^{{\rm i}(x-y, \xi)}\sigma\left(\xi, \frac{x+y}{2}\right)f(y)dyd\xi. $$
%\end{cor}
								
The {\it Metaplectic representation} $\CM$ of $\Sp(n,\R)$ is defined as follows. Given $\A\in \Sp(n,\R)$, it induces an automorphism $T_{\A}$ of the Heisenberg group ${\bf H_n}$ by
$$T_{\A}(p,q, t)=\left(\A\begin{pmatrix}p  \\ q \end{pmatrix}, t\right),\qquad p,q\in \R^n, \qquad t\in\R. $$
Since the Schr\"odinger representation $\rho$, defined in (\ref{rhode}), is an irreducible unitary representation of ${\bf H_n}$ on $L^2(\R^n)$, the composite $\rho\circ T_{\A}$ is also an irreducible unitary representation of ${\bf H_n}$, satisfying
$$(\rho\circ T_{\A})(0,0, t)=e^{{\rm i}t},\qquad t\in\R. $$
By the Stone-von Neumann theorem, the two irreducible unitary representations $\rho$ and $\rho\circ T_{\A}$ are equivalent, meaning there exists a unitary operator $\CM(\A)$ on $L^2(\R^n)$ such that
\begin{equation}\label{hengdeng1}
(\rho\circ T_{\A})(\CX) =\CM(\A)  \rho (\CX)   \CM(\A)^{-1}, \qquad \CX\in {\bf H_n}.
\end{equation}
As demonstrated in \cite{Fol1989}, $\CM(\A)$ can be uniquely chosen, up to factors $\pm 1$, so that $\CM$ becomes a double-valued unitary representation of $\Sp(n,\R)$. Hence,
\begin{equation}\label{product-Meta}
\CM(\A \mathbb B )=\pm \,  \CM(\A) \CM(\B),\qquad \A, \B\in\Sp(n,\R),
\end{equation}
and, specifically, $\CM(\A) \CM\left(\A^{-1}\right)=\pm \, \Id$.
Therefore, $\CM$ maps $\Sp(n,\R)$ into the group of unitary operators on $L^2(\R^n)$ modulo ${\pm \, \Id}$ and is referred to as the {\it Metaplectic representation} of $\Sp(n,\R)$. In explicit formulations, the ambiguity of $\pm 1$ typically manifests as the ambiguity in the sign of a square root. For simplicity, we often omit the sign $\pm1$.

%Given $\A\in \Sp(n,\R)$, the operator $\CM(\A)$ is well-defined on $L^2(\R^n)$, and consequently, on $\CH^s(\R^n)$ for $s\geq 0$, as specified in (\ref{hengdeng1}). For $s > 0$, we extend the definition of $\CM(\A)$ to $\CH^{-s}(\R^n)$ using duality:
%\begin{equation}\label{dfHms}
%\la \CM(\A)u, v\ra=\la u, \CM(\A^{-1}) v\ra, \qquad u \in \CH^{-s}(\R^n),  \quad  v\in \CH^s(\R^n).
%\end{equation}
%								
%Define the infinitesimal version $d\CM$ of the Metaplectic representation as
%\begin{equation}\label{defi-infinitesimal}
%d\CM(\CA):=\left.\frac{d}{dt} \CM\left(e^{t\CA}\right)\right|_{t=0}, \qquad \CA\in sp(n, \R).
%\end{equation}
%In this context and subsequently, the sign of $\CM(e^{t\CA})$ is chosen to ensure continuity in $t$ and equality to $\mathbb Id$ at $t=0$. Under this framework, the following theorem is established:
%\begin{thm}(Theorem 4.45 in \cite{Fol1989})\label{Thm4.45}
%For any $u$ in the Schwartz space $\CS(\R^n)$ and $\CA\in sp(n, \R)$, $$ d\CM(\CA)u= {\rm i}\CQ_{\CA}(Z) u.$$
%\end{thm}
%There exists a remarkable correspondence between the solution of ODE and  the quantum PDE as the following.
%It is well known that
%$z(t)=e^{\CA t}z_0$ satisfies
%the Hamiltonian equation
%\begin{equation}\label{ODE-pre}
%z'(t) =\CA z(t), \qquad z(0)=z_0\in \R^{2n}.
%\end{equation}
% Denote $\psi(t)=\CM(e^{t\CA})u$.
%\begin{thm}(\cite{Fol1989})
%$\psi(t)$ satisfies the quantum Hamiltonian PDE
%\beq\label{PDE-pre}
%\frac{1}{{\rm i}}\partial_t \psi= \CQ_{\CA}(Z) \psi(t,x) , \qquad \psi(0)=u\in \CH^{s}(\R^n),\eeq
%where the matrix $\CA\in sp(n, \R)$ and $s\in \R$.
%\end{thm}

\medskip										
										
For specific matrices $\A\in \Sp(n, \R)$, explicit formulas of the Metaplectic representation can be provided:
\begin{thm}\cite{Fol1989}\label{ThmFormMeta}
Let $\A= \begin{pmatrix}A & B\\C & F\end{pmatrix}\in \Sp(n, \R)$ and $u\in L^2(\R^n)$.
\begin{itemize}
\item [(i)] If $\det A\neq 0$, then $$\displaystyle (\CM(\A) u)(x) =\frac{1}{(2\pi)^n}(\det A)^{-\frac12} \int_{\R^n} e^{{\rm i} S(\xi,x)} \hat{u}(\xi) \, d\xi,$$
where $$S(\xi,x):= -\frac12\la x, CA^{-1} x\ra+\la \xi, A^{-1}x\ra+\frac12\la\xi, A^{-1}B \xi\ra . $$
\item [(ii)] If $\det B\neq 0$, then
$$\displaystyle (\CM(\A) u)(x) =\left(\frac{\rm i}{2\pi}\right)^{\frac{n}{2}}(\det B)^{-\frac12} \int_{\R^n} e^{ {\rm i} T(x,y)} u(y) \, dy,$$
where $$T(x,y)= -\frac12\la x, FB^{-1} x\ra+\la y, B^{-1}x\ra-\frac12\la y, B^{-1}A  y\ra .$$
\end{itemize}
\end{thm}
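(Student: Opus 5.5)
The plan is to exhibit, in each case, an explicit unitary operator $R(\A)$ that intertwines $\rho$ and $\rho\circ T_{\A}$, i.e.
$$R(\A)\,\rho(v)\,R(\A)^{-1}=\rho(\A v),\qquad v\in\R^{2n}.$$
Since $\rho$ is irreducible, Schur's lemma says any two unitary intertwiners differ by a unimodular scalar. So $R(\A)$ agrees with $\CM(\A)$ from (\ref{hengdeng1}) up to a constant of modulus one. The remaining work is to fix that constant up to $\pm1$, consistently with the double-valued representation (\ref{product-Meta}).

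For unitarity I would factor each candidate operator into elementary unitary pieces. In case (ii), $R(\A)$ is the composition of:
\begin{itemize}
\item multiplication by the chirp $e^{-\frac{\rm i}{2}\la y,B^{-1}Ay\ra}$;
\item a Fourier transform (suitably normalized);
\item the linear change of variables $x\mapsto B^{-1}x$, compensated by $|\det B|^{-1/2}$;
\item multiplication by the chirp $e^{-\frac{\rm i}{2}\la x,FB^{-1}x\ra}$.
\end{itemize}
Both chirps are real symmetric quadratic phases, because $\A\in\Sp(n,\R)$ forces $B^{-1}A$ and $FB^{-1}$ to be symmetric. Each factor is $L^2$-unitary, and this fixes the modulus $(2\pi)^{-n/2}|\det B|^{-1/2}$. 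Case (i) works the same way. There $R(\A)$ is: pass to $\hat u$, multiply by the chirp $e^{\frac{\rm i}{2}\la\xi,A^{-1}B\xi\ra}$, apply the inverse Fourier transform composed with the dilation by $A^{-1}$, then multiply by $e^{-\frac{\rm i}{2}\la x,CA^{-1}x\ra}$. Symmetry of $A^{-1}B$ and $CA^{-1}$ again follows from the symplectic relations stated above.

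Next I would check the intertwining relation on the generators. It suffices to show
$$R(\A)\,(\la p,D\ra+\la q,X\ra)\,R(\A)^{-1}=\la p',D\ra+\la q',X\ra,\qquad (p',q')=\A(p,q),$$
on Schwartz functions, and then exponentiate. For this I use
\begin{itemize}
\item $X_j\,e^{{\rm i}T}$ versus $D_{y_j}e^{{\rm i}T}$;
\item integration by parts in $y$ (respectively $\xi$);
\item the fact that $\nabla_xT$ and $\nabla_yT$ are linear in $(x,y)$.
\end{itemize}
Solving $\nabla_y T=B^{-1}x-B^{-1}Ay$ for $x$ and substituting into $\nabla_x T$ expresses the image of $(D,X)$ as a linear combination. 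The block identities $A^*C=C^*A$, $B^*F=F^*B$ and $A^*F-C^*B=\I_n$ then make the coefficient matrix coincide with the action of $\A$ in the convention of (\ref{SchroRep-L2}). A cleaner alternative is to verify $R(\A)\rho(v)u=\rho(\A v)R(\A)u$ directly via the shift $y\mapsto y-p$ and completing the quadratic phase.

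The main obstacle will be the phase constant: the factors $(\det A)^{-1/2}$ and $({\rm i})^{n/2}(\det B)^{-1/2}$, with their branch. These must match the specific normalization of $\CM$ up to sign, not merely up to a unimodular scalar. My first attempt would be a continuity argument along the one-parameter groups $e^{t\CA}$, where $\CM(e^{t\CA})$ is chosen continuous with $\CM(\I)=\Id$. Near $t=0$ we have $\det A\neq0$, and formula (i) tends strongly to the identity as $t\to0$, which pins down the branch of $(\det A)^{-1/2}$. For (ii), I would use the rotation $e^{t\J_n}$ at $t=\pi/2$, which gives the Fourier transform; comparing with Mehler's formula produces the factor ${\rm i}^{n/2}$. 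The constant for a general $\A$ with $\det B\neq0$ then follows from the group law (\ref{product-Meta}), by composing a type-(i) element with $\J_n$. As a safety check, I would evaluate both formulas on the Gaussian $e^{-|x|^2/2}$ and compare with the known Gaussian-to-Gaussian action of $\CM$.
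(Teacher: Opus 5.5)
\textbf{The gap is in fixing the phase constant.}

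The paper gives no proof of this statement; it quotes Folland. There the kernels come from composing exactly normalized metaplectic operators of generators along
\[
\A=\begin{pmatrix}\I_n&0\\ CA^{-1}&\I_n\end{pmatrix}\begin{pmatrix}A&0\\ 0&(A^*)^{-1}\end{pmatrix}\begin{pmatrix}\I_n&A^{-1}B\\ 0&\I_n\end{pmatrix},
\]
and along $\A=(\A\J_n^{-1})\J_n$ for (ii). Your unitarity factorization, your intertwining check and Schur's lemma are all fine. Together they give $R(\A)=c(\A)\CM(\A)$ with $|c(\A)|=1$.

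The gap is the remaining step, $c(\A)=\pm1$, and your continuity argument does not deliver it. Strong continuity of $t\mapsto R(e^{t\CA})$ and $R(\I_{2n})=\Id$ only show that $c$ is continuous with $c(\I_{2n})=1$. This fixes the branch of $(\det A)^{-1/2}$ near $1$, but not the Schur scalar. Concretely, $e^{{\rm i}\,\mathrm{tr}(A^{-1}B)}R(\A)$ is unitary, intertwines $\rho$ with $\rho\circ T_{\A}$, is continuous on $\{\det A\neq 0\}$, and equals $\Id$ at $\I_{2n}$. Your argument would therefore equally ``prove'' that it is $\pm\CM(\A)$, which is false.

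There is a second problem. The set $\{\det A<0\}$ is not connected to $\I_{2n}$ inside $\{\det A\neq 0\}$. So nothing done within formula (i) can decide whether the constant there is $|\det A|^{-1/2}$ or $\pm{\rm i}|\det A|^{-1/2}$, and these differ by $\pm{\rm i}$, not by a sign.

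\textbf{What is missing} is an exact normalization on subgroups where your candidate is visibly multiplicative.
\begin{enumerate}
\item Use the factorization above, which is exactly your chirp--dilation--chirp decomposition, together with (\ref{product-Meta}), to reduce (i) to the three subgroups.
\item On each subgroup, identify $\CM$ through $\CM(e^{t\CA})=e^{{\rm i}t\CQ_{\CA}(Z)}$. This is Folland's theorem on the infinitesimal metaplectic representation, the same input your Mehler comparison for $\CM(\J_n)$ already needs. Because the Weyl-ordered $\CQ_{\CA}(Z)$ carries no additive constant, it kills the residual character $e^{{\rm i}\lambda t}$ that continuity leaves free.
\begin{itemize}
\item For $\CA=\begin{pmatrix}0&0\\ P&0\end{pmatrix}$ one gets $\CQ_{\CA}(Z)=-\frac12\la X,PX\ra$, hence the chirp $e^{-\frac{\rm i}{2}\la x,Px\ra}$ with no extra phase.
\item For $\CA={\rm diag}(K,-K^*)$ one gets $e^{-\frac t2\mathrm{tr}K}u(e^{-tK}x)=(\det e^{tK})^{-\frac12}u(e^{-tK}x)$.
\item For $\CA=\begin{pmatrix}0&Q\\ 0&0\end{pmatrix}$ one gets $\frac12\la D,QD\ra$.
\end{itemize}
\item For $\det A<0$ you need one computation that crosses $\{\det A=0\}$. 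Take $L_0={\rm diag}(-1,1,\dots,1)$. Then ${\rm diag}(L_0,L_0)$ is the rotation by $\pi$ in the $(\xi_1,x_1)$-plane, so its metaplectic operator is $e^{{\rm i}\pi\CT_1}$ with $\CT_1=\frac12(D_1^2+X_1^2)$. This acts by ${\rm i}(-1)^k$ on the $k$-th Hermite function in $x_1$, i.e.\ it is $u\mapsto{\rm i}\,u(L_0x)$, matching $(\det L_0)^{-1/2}=\pm{\rm i}$.
\end{enumerate}

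With (i) secured this way, your derivation of (ii) goes through: use $\A=(\A\J_n^{-1})\J_n$ and $\CM(\J_n)=e^{{\rm i}\frac{\pi}{2}\CT}$ evaluated via Mehler. The separate intertwining check and Schur argument then become redundant.
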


%In the new scale, we still have
\begin{prop}(\cite[Proposition 3.2]{LLZ2025})\label{boundofmeta}
For $\A\in \Sp(n,\R)$, $s\geq 0$, we have
 \begin{equation}\label{equiMeta}
\|\A\|^{-s}\|u\|_{s} \lesssim \|\CM(\A)u\|_{s}\lesssim \|\A\|^{s} \|u\|_{s},\qquad \forall \  u\in\CH^s.
\end{equation}
Furthermore, for $s>0$ and $u\in\CH^s\setminus \{0\}$, it holds that
\begin{equation}\label{optimallower-Meta}
\|\CM(\A)u\|_{s}\gtrsim_u \|\A\|^{s}.\end{equation}
\end{prop}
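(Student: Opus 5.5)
The idea is to reduce everything to the symplectic structure, via the identity $\CM(\A)\rho(v)\CM(\A)^{-1}=\rho(\A v)$ from (\ref{hengdeng1}), which says that conjugation by $\CM(\A)$ acts linearly on the operators $D$ and $X$. Differentiating $\rho(\tau v)$ at $\tau=0$ gives $\rho(\tau v)=e^{{\rm i}\tau\la v,Z\ra}$ with generator ${\rm i}\la v,Z\ra$, where $v=(p,q)$ and $\la v,Z\ra=\la p,D\ra+\la q,X\ra$. Hence
\[
\CM(\A)\la v,Z\ra \CM(\A)^{-1}=\la \A v,Z\ra \quad\text{on } \CS .
\]
Equivalently, each component of $\CM(\A)^{-1}Z\CM(\A)$ is a linear combination of $D_j,X_j$ whose coefficients are entries of $\A^{\pm1}$ (up to transposes and $\J_n$). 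Since $\|\A^{-1}\|=\|\J_n\A^*\J_n^{-1}\|=\|\A\|$ for symplectic $\A$, all these coefficients are bounded by $\|\A\|$.

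\textbf{Upper bound for integer $s=k$.} I will use the equivalent norm $\|u\|_{k}\simeq\sum_{|\alpha|\le k}\|Z^\alpha u\|_{L^2}$. This follows from the equivalences recalled from \cite{FR23} together with the standard fact that $\CT^{k/2}$ controls all monomials of degree $\le k$ in $(D,X)$. Since $\CM(\A)$ is unitary,
\[
\|Z^\alpha\CM(\A)u\|_{L^2}=\|\CM(\A)^{-1}Z^\alpha\CM(\A)u\|_{L^2}=\Big\|\prod_i\big(\CM(\A)^{-1}Z_{\alpha_i}\CM(\A)\big)u\Big\|_{L^2}.
\]
Each factor is a linear form in $Z$ with coefficients of size $\le\|\A\|$. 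Expanding the product gives at most $C\|\A\|^{|\alpha|}$ times a sum of $\|Z^\beta u\|$ with $|\beta|=|\alpha|$. Since $\|\A\|\ge1$ for symplectic matrices, this yields $\|\CM(\A)u\|_k\lesssim\|\A\|^k\|u\|_k$. The lower bound in (\ref{equiMeta}) follows by applying the upper bound to $\CM(\A^{-1})$ and using $\|\A^{-1}\|=\|\A\|$.

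\textbf{Non-integer $s$.} I expect this step to be the main technical obstacle. The plan is to interpolate with complex interpolation between $L^2$ and $\CH^k$: since $\CH^s=[L^2,\CH^k]_{s/k}$ via the functional calculus of $\CT$, the operator norms interpolate, giving $\|\A\|^{s}$. The one care point is that the implicit constants remain uniform in $\A$, which holds because interpolation constants depend only on the scale and not on the operator.

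\textbf{Optimal lower bound (\ref{optimallower-Meta}).} I would use the Iwasawa/polar decomposition $\A=\CU_1\Lambda\CU_2$, with $\CU_i\in\Sp(n,\R)\cap O(2n)$ and $\Lambda={\rm diag}(\lambda_1,\dots,\lambda_n,\lambda_1^{-1},\dots,\lambda_n^{-1})$, where $\lambda_1=\|\A\|$. Orthogonal symplectic matrices commute with $\CT$ (their metaplectic images are generated by quadratic forms commuting with the harmonic oscillator), so they are unitary on every $\CH^s$ with $\|\CM(\CU)u\|_s=\|u\|_s$. It therefore suffices to treat $\CM(\Lambda)w$ with $w=\CM(\CU_2)u\neq0$.

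By Theorem~\ref{ThmFormMeta}, $\CM(\Lambda)$ is a dilation:
\[
(\CM(\Lambda)w)(x)=\prod_j\lambda_j^{-1/2}\,w(x_1/\lambda_1,\dots,x_n/\lambda_n)
\]
(or with $\lambda_j$ in place of $\lambda_j^{-1}$, depending on the convention). Then either $\||X|^s\CM(\Lambda)w\|$ or $\||X|^s\widehat{\CM(\Lambda)w}\|$ is at least $\lambda_1^{s}\||x_1|^s w\|$, or the same quantity for $\hat w$. Both $\||x_1|^s w\|_{L^2}$ and $\||\xi_1|^s\hat w\|_{L^2}$ are positive for $w\ne0$.

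One remaining point is that $w$ depends on $\CU_2$. This is handled by compactness of $\Sp(n,\R)\cap O(2n)$ and continuity of $\CU\mapsto\||x_1|^s\CM(\CU)u\|$, which gives a positive infimum depending only on $u$ and hence the constant $\gtrsim_u$.
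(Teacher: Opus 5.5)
Your argument is correct. The paper does not prove this proposition: it quotes it from \cite[Proposition 3.2]{LLZ2025}, so there is no in-text proof to match against.

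The only tool the paper sets up nearby is the pair of explicit kernels in Theorem~\ref{ThmFormMeta}. Those apply only when $\det A\neq 0$ or $\det B\neq 0$, so an argument through them needs case distinctions and hand estimates of oscillatory integrals. Your route is structural and case-free:
\begin{itemize}
\item The exact linear Egorov identity, obtained by differentiating (\ref{hengdeng1}), together with $\|\A^{-1}\|=\|\A\|\ge 1$, gives the two-sided bound for integer $s$.
\item Complex interpolation, which is exact of exponent $\theta$ and hence uniform in $\A$, handles fractional $s$.
\item The decomposition $\A=\CU_1\Lambda\CU_2$, combined with the fact that $\CM(\CU)$ commutes with $\CT$ for $\CU\in\Sp(n,\R)\cap O(2n)$, reduces the sharp lower bound (\ref{optimallower-Meta}) to a pure dilation, where it is a change of variables.
\end{itemize}

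A few points deserve a sentence each when you write this up.
\begin{itemize}
\item The decomposition $\A=\CU_1\Lambda\CU_2$ is the Cartan ($KAK$, symplectic singular value) decomposition, not Iwasawa. The ordering $\lambda_1\ge\dots\ge\lambda_n\ge 1$ can be arranged with symplectic permutations and rotations, giving $\lambda_1=\|\A\|$ in operator norm.
\item Establish the conjugation identity on $\mathcal{S}(\R^n)$, which $\CM(\A)$ preserves, and then extend by density.
\item Interpolation is cleanest with the equivalent norm $\|(1+\CT)^{s/2}u\|_{L^2}$.
\item In the compactness step, $L^2$-continuity of $\CU\mapsto\CM(\CU)u$ is not by itself enough, because $|x_1|^s$ is unbounded. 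Either observe that $\CM(\CU)$ commutes with $(1+\CT)^{s/2}$, so the map is continuous into $\CH^s$. Or note that $w\mapsto\||x_1|^s w\|_{L^2}$ is lower semicontinuous on $L^2$ by Fatou. Either way the everywhere-positive function attains a positive minimum on the compact group. The sign ambiguity of $\CM$ is irrelevant since only norms enter.
\end{itemize}
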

%\begin{remark}
%For (\ref{equiMeta}) the implicit constants depend only on $s$ and $n$, while
%for (\ref{optimallower-Meta}) the implicit constant also depends on $u$.
%%We remark that all the implicit contants don't depend on the parameter `t', even when $t\in \mathbb R$.
%\end{remark}

\begin{prop} \label{schandmeta}
Given $v\in \R^{2n}$, $\A\in \Sp(n,\R)$ and $u\in \CH^s(\R^n)\setminus \{0\}$,  we have
$$\|\rho(v)\CM(\A)u\|_s\simeq_u \|v\|^s+\|\A\|^s, \qquad s>0.$$
%where the implicit constants depend on $s, n$ and $u$.
\end{prop}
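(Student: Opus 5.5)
We need to show $\|\rho(v)\CM(\A)u\|_s\simeq_u \|v\|^s+\|\A\|^s$; note $\|\A\|\ge 1$ for symplectic $\A$, so the right side is $\gtrsim 1$. The plan is to work with the equivalent norm $\|w\|_s\simeq \|w\|_{L^2}+\||X|^s w\|_{L^2}+\||X|^s\hat w\|_{L^2}$ and understand how $\rho(v)$ acts on $X$ and $D$.

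\emph{Conjugation identity.} By (\ref{SchroRep-L2}), with $v=(p,q)$, conjugation by $\rho(v)$ shifts position and momentum:
$$\rho(v)^{-1}X\rho(v)=X-p, \qquad \rho(v)^{-1}D\rho(v)=D+q$$
(up to sign conventions, to be checked). Set $w=\CM(\A)u$, which lies in $\CH^s$ by Proposition \ref{boundofmeta}. Then $\||X|^s\rho(v)w\|_{L^2}=\||X-p|^sw\|_{L^2}$, and likewise on the Fourier side with $|D+q|^s$.

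\emph{Upper bound.} Use $|x-p|^s\lesssim |x|^s+|p|^s$ and $\|w\|_{L^2}=\|u\|_{L^2}$. This gives
$$\|\rho(v)w\|_s\lesssim \|w\|_s+\|v\|^s\|u\|_{L^2}\lesssim_u \|\A\|^s+\|v\|^s,$$
where the last step uses (\ref{equiMeta}).

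\emph{Lower bound.} Two estimates are needed.
\begin{itemize}
\item From (\ref{equiMeta}) applied with $\rho(-v)$, which is the translation in the other direction, we get $\|w\|_s\lesssim \|\rho(v)w\|_s+\|v\|^s\|u\|_{L^2}$. This alone does not suffice, since it has the wrong sign.
\item The second estimate is $\|\rho(v)w\|_s\gtrsim_u \|v\|^s$, which is the main obstacle. Since $\sum_j\||X-p|^sw\|\ge \ldots$, it is not trivial because $w$ depends on $\A$ and could concentrate near $p$.
\end{itemize}
To handle the second estimate, fix a constant $c$ and split into cases.
\begin{itemize}
\item If $\|v\|^s\le c\|\A\|^s$, take the triangle inequality in the form $\|\A\|^s\lesssim_u\|w\|_s$ from (\ref{optimallower-Meta}), combined with the reversed inequality above. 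This gives $\|\rho(v)w\|_s\ge C_u^{-1}\|\A\|^s-C\|v\|^s\|u\|$. It works when $\|v\|$ is small relative to $\|\A\|$, but the constants need care.
\end{itemize}
A cleaner route avoids this bookkeeping. Use $\CM(\A)^{-1}\rho(v)\CM(\A)=\rho(\A^{-1}v)$ from (\ref{hengdeng1}) to get
$$\rho(v)\CM(\A)u=\CM(\A)\rho(\A^{-1}v)u.$$
Then apply Proposition \ref{boundofmeta} in both forms, reducing to showing $\|\rho(y)u\|_s$ is comparable to $1+\|y\|^s$ times factors. This introduces $\|\A\|$ powers, so it too is lossy.

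The approach I would actually try first works at the level of second moments. By interpolation it suffices to consider the quantities $\la |X|^2\ra$ and $\la|D|^2\ra$ and then pass to general $s$. The second moments of $\rho(v)\CM(\A)u$ are explicit: the mean vector is $m=\A m_u+v'$, where $v'$ equals $v$ up to $\J_n$, and the covariance is $\A\Sigma_u\A^*$. The total second moment is therefore $|\A m_u+v'|^2+{\rm tr}(\A\Sigma_u\A^*)$. Since $\Sigma_u$ is positive definite by the uncertainty principle, ${\rm tr}(\A\Sigma_u\A^*)\gtrsim_u\|\A\|^2$. Then:
\begin{itemize}
\item If $|v|\le 2\|\A\||m_u|+\ldots$, the covariance term dominates.
\item Otherwise the mean term gives $\gtrsim |v|^2$.
\end{itemize}
This handles $s=2$, and after normalization by $\|u\|_{L^2}$ it also gives $s\le2$ via Jensen/Hölder. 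For larger $s$, I would apply Hölder $\la|z|^s\ra\ge\la|z|^2\ra^{s/2}$ (normalized), valid for $s\ge2$. For $s<2$, I would use the compactness argument behind (\ref{optimallower-Meta}) with the translation included. This case is the hardest step.
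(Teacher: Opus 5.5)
\textbf{Verdict.} Your upper bound is fine, and your lower bound is complete for $s\ge1$, but for $0<s<1$ the step you call the hardest is left open, so the proposal has a genuine gap there.

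\textbf{What works.} The second-moment argument is correct once it is read properly. After normalizing $\|u\|_0=1$, the quantity $|\A m_u+v'|^2+{\rm tr}(\A\Sigma_u\A^*)$ equals $\||X|\psi\|_0^2+\||D|\psi\|_0^2$. So it controls the $\CH^1$ norm, not ``$s=2$''. Jensen, applied separately to the position and momentum densities $|\psi|^2$ and $|\hat\psi|^2$, then lifts the bound to $\CH^s$ for every $s\ge1$.

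\textbf{Where it fails.} Below $s=1$ the argument breaks down for two reasons. First, Jensen runs the wrong way for $0<s<1$, so your remark that it ``also gives $s\le2$'' has the inequality reversed. Second, $u\in\CH^s$ with $s<1$ need not have finite second moments, so $m_u$ and $\Sigma_u$ may not exist. Interpolation does not rescue this. One could use $\|\psi\|_1\lesssim\|\psi\|_s^\theta\|\psi\|_\tau^{1-\theta}$ with $1=\theta s+(1-\theta)\tau$ against your upper bound in some $\CH^\tau$, $\tau>1$. That would push the $\CH^1$ lower bound down to $\CH^s$, but only for $u\in\CH^\tau$, which is not assumed. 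Deferring to ``the compactness argument behind (\ref{optimallower-Meta}) with the translation included'' does not fill the gap either: $v$ ranges over a non-compact set and, in the critical regime, grows with $\|\A\|$.

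Your earlier case split has the same hole. The perturbative bound $\|\rho(v)w\|_s\ge C_u^{-1}\|\A\|^s-C\|v\|^s\|u\|_0$ handles $\|v\|\le c\|\A\|$ only for $c$ small. The tail estimate
\[
\int_{\R^n}|y-p|^{2s}|w(y)|^2\,dy\ \ge\ \Big(\frac{\|p\|}{2}\Big)^{2s}\|u\|_0^2-\||X|^sw\|_0^2,\qquad \||X|^sw\|_0\lesssim\|\A\|^s\|u\|_s,
\]
together with its Fourier analogue for $q$, handles $\|v\|\ge K_u\|\A\|$ only for $K_u$ large. Nothing covers $\|v\|\simeq\|\A\|$.

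\textbf{The missing ingredient.} You need a lower bound that is uniform in the translation:
\[
\inf_{v\in\R^{2n}}\|\rho(v)\CM(\A)u\|_s\gtrsim_u\|\A\|^s .
\]
Your ``cleaner route'' lost powers of $\|\A\|$ only because you conjugated by all of $\CM(\A)$. Conjugate by its compact part instead, as follows.
\begin{enumerate}
\item Take the Cartan decomposition $\A=K_1\Lambda K_2$ with $K_i\in\Sp(n,\R)\cap O(2n)$ and $\Lambda={\rm diag}(L,L^{-1})$, where $L={\rm diag}(\lambda_1,\dots,\lambda_n)$ and $\lambda_1=\|\A\|\ge\lambda_j\ge1$.
\item Each $K_i$ commutes with $\J_n$, so $\CM(K_i)$ commutes with $\CT$ and is an isometry of $\CH^s$. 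By (\ref{hengdeng1}), $\rho(v)\CM(K_1)=\CM(K_1)\rho(K_1^{-1}v)$. Hence $\|\rho(v)\CM(\A)u\|_s=\|\rho(v')\CM(\Lambda)u'\|_s$, with $v'=K_1^{-1}v=(p',q')$ and $u'=\CM(K_2)u$.
\item By Theorem \ref{ThmFormMeta}(i), $\CM(\Lambda)u'(x)=(\det L)^{-1/2}u'(L^{-1}x)$. Therefore
\[
\||X_1|^s\rho(v')\CM(\Lambda)u'\|_0^2=\|\A\|^{2s}\int_{\R^n}|y_1-c|^{2s}|u'(y)|^2\,dy,\qquad c=p'_1/\lambda_1 .
\]
\item The last integral is continuous and positive on $\R\times\{\CM(K)u\}$, and the orbit $\{\CM(K)u\}$ is compact in $\CH^s$. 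The same tail estimate makes it coercive in $c$, uniformly on the orbit. So its infimum is a positive constant depending only on $u$.
\end{enumerate}
Combined with the tail estimate, this proves the proposition for all $s>0$, and the second-moment detour becomes unnecessary.

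\textbf{Comparison with the paper.} The paper's proof is the direct computation you started with. It asserts $\int|y-p|^{2s}|w|^2\,dy\simeq\|p\|^{2s}\|w\|_0^2+\||X|^sw\|_0^2$, and the analogous Fourier-side statement, and then invokes Proposition \ref{boundofmeta}. Only the upper half of that comparability holds for general $w$. The lower half is exactly the concentration issue you raised, so you were right not to take it for granted. The two ingredients above are what is needed to make the paper's lower bound rigorous.
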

\proof
%In the following we suppress the parameter $t$.
For $v=\left(\begin{array}{c}p\\q\end{array}\right)$, according to (\ref{SchroRep-L2}), we have
\begin{eqnarray*}
(\rho(v)u)(x)&=&e^{\ri \la q,x \ra+\frac{\ri}{2} \la p, q\ra} u(x+p),\\
(\CF\rho(v)u)(\xi)&=&e^{\ri \la p,\xi\ra-\frac{\ri}{2} \la p, q\ra}\hat u(\xi-q).
\end{eqnarray*}
Then, for $u\in\CH^s$,
\begin{eqnarray*}
\||X|^s\rho(v) \CM(\A)u\|_0^2
&\simeq &\int_{\R^n}\sum_{1\leq j\leq n}|x_j|^{2s}\left|(\CM(\A) u)(x+p)\right|^2 \, dx\\
&=&\int_{\R^n}\sum_{1\leq j\leq n}|y_j-p_j|^{2s}\left|(\CM(\A) u)(y)\right|^2 \, dy\\
&\simeq& \|p\|^{2s} \|\CM(\A)  u\|^2_0+\||X|^s\CM(\A)  u\|^2_0\\
&=& \|p\|^{2s} \|u\|^2_0+\||X|^s\CM(\A)  u\|^2_0,
\end{eqnarray*}
\begin{eqnarray*}
\||X|^s\CF\rho(v) \CM(\A)u\|_0^2
&=&\int_{\R^n}\sum_{j}|\xi_j|^{2s}\left|(\CF\CM(\A) u)(\xi-q)\right|^2 \, d\xi\\
&=&\int_{\R^n}\sum_{j}|\eta_j+q_j|^{2s}\left|(\CF\CM(\A)u)(\eta)\right|^2 \, d\eta\\
&\simeq& \|q\|^{2s}  \|u\|^2_0+\||X |^s\CF\CM(\A)u\|^2_0,
\end{eqnarray*}
and
$\|\rho(v) \CM(\A)u\|_0^2=\|\CM(\A)u\|_0^2. $
Hence, from Proposition \ref{boundofmeta},
$$\|\rho(v)\CM(\A)u\|_s\simeq ( \|p\|^{s}  + \|q\|^{s}  )\|u\|_0+\|\CM(\A)  u\|_s \simeq_u  \|v\|^s+\|\A\|^s. \qed$$

\subsection{Conjugation in classical and quantum Hamiltonians}\label{secODEtoPDE}

According to \cite[Lemma 4.3 and 4.4]{LLZ2025}, we have

\begin{prop}\label{prop_MetaandSch}
If the affine system
\begin{equation*}\label{ODEhq}
\dot{z}= \CA(t) z + \ell(t),\quad \CA(\cdot)\in C_{b}^0(\R, \sp(n, \R)),\quad \ell(\cdot)\in C_{b}^0(\R, \R^{2n})
\end{equation*}
is conjugated to $\dot{y}= \J_n y$, via
some transformation
$$z(t)=U(t) y(t)+ v(t),\quad  U(\cdot)\in  C^1(\R, \Sp(n,\R))\footnote{The notation $C^1
(\R,\bullet)$ with $\bullet=\Sp(n, \R)$ or $\R^{2n}$ is the subspace of $C^0
(\R,\bullet)$, in which the matrix or vector has entries $C^1$ w.r.t. $t$.},\quad v(\cdot)\in C^1(\mathbb R, \R^{2n}),$$
%where $v(t)$ is a solution of the equation (\ref{ODEhq}) and $U(t)$ satisfies
%\begin{eqnarray}\label{xinbianhuan}
%\dot{U}=\CA(t)U-U\CB.
%\end{eqnarray}
then, for the two Hamiltonian PDEs
$$
\frac{1}{{\rm i}} \partial_t \psi_1=\left(\CQ_{\CA(t)}(Z)+\CL_{\ell(t)}(Z)\right)\psi_1,\qquad \frac{1}{{\rm i}} \partial_t \psi_2=\CQ_{\J_n}(Z)\psi_2,
$$
their solutions satisfy
$$\psi_1(t)= e^{ {\rm i}\int_0^{t}\CC(\tau)d\tau} \rho(v(t)) \CM( U(t))  \psi_2(t),\quad \CC(t):=-\frac12\la v(t), \J_n
\ell(t)\ra$$
where $\CC(\cdot)=-\frac12\la v(t), \J_n
\ell(t)\ra$.
\end{prop}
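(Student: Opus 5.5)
The plan is to check directly that the operator family $\CU(t):=e^{\ri\int_0^t\CC(\tau)d\tau}\rho(v(t))\CM(U(t))$ satisfies the intertwining identity
$$\frac1\ri\partial_t\CU(t)=\left(\CQ_{\CA(t)}(Z)+\CL_{\ell(t)}(Z)\right)\CU(t)-\CU(t)\CQ_{\J_n}(Z),$$
at least when applied to Schwartz functions. Once this holds, $\psi_1=\CU\psi_2$ solves the first PDE whenever $\psi_2$ solves the second. Uniqueness of $L^2$ solutions for quadratic Hamiltonians with bounded continuous coefficients then gives the claim for all solutions, with the initial data of $\psi_2$ fixed by $\psi_2(0)=\CU(0)^{-1}\psi_1(0)$.

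First I would translate the hypothesis into ODEs for $U$ and $v$. Substituting $z=Uy+v$ with $\dot y=\J_ny$ into $\dot z=\CA z+\ell$ gives, for every $y$, $\dot U y+UJ_ny+\dot v=\CA Uy+\CA v+\ell$. Separating the parts linear in $y$ from those independent of $y$ yields $\dot U=\CA U-U\J_n$ and $\dot v=\CA v+\ell$. Next I would use three infinitesimal identities for the metaplectic and Schrödinger representations, taken from \cite{Fol1989} (with signs fixed by the conventions of the paper, consistent with (\ref{hengdeng1})):
\begin{itemize}
\item differentiating $\CM(U(t))$ along a $C^1$ path, and using (\ref{product-Meta}) with continuity to fix the sign, gives $\frac{d}{dt}\CM(U)=\ri\,\CQ_{\dot UU^{-1}}(Z)\,\CM(U)$;
\item conjugation acts as $\CM(U)\,\CQ_{\B}(Z)\,\CM(U)^{-1}=\CQ_{U\B U^{-1}}(Z)$, which follows by differentiating (\ref{hengdeng1});
\item the Weyl relation (\ref{schrepmult}) gives $\frac{d}{dt}\rho(v)=\ri\big(\la\dot v,\tilde Z\ra+\text{scalar}\big)\rho(v)$, together with $\rho(v)\,\CQ_{\B}(Z)\,\rho(v)^{-1}=\CQ_{\B}(Z-\text{shift by }v)$.
\end{itemize}

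Now compute $\frac1\ri\partial_t\CU\,\CU^{-1}$ term by term. The metaplectic factor contributes $\rho(v)\,\CQ_{\dot UU^{-1}}(Z)\,\rho(v)^{-1}$. Substituting $\dot UU^{-1}=\CA-U\J_nU^{-1}$, this equals $\rho(v)\,\CQ_{\CA}(Z)\,\rho(v)^{-1}-\CU\,\CQ_{\J_n}\,\CU^{-1}$, where the second equality uses the conjugation identity. Shifting $\CQ_{\CA}$ by $v$ produces $\CQ_{\CA}(Z)$, plus a linear term $\propto\la \CA v,\cdot\ra$, plus the constant $\CQ_{\CA}(v)$. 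The translation factor contributes a linear term in $\dot v=\CA v+\ell$ together with a scalar. The linear terms then combine exactly to $\CL_{\ell}(Z)$. The scalar pieces should reduce to $-\frac12\la v,\J_n\ell\ra$, and this is cancelled by the phase factor $e^{\ri\int\CC}$.

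The main obstacle is purely bookkeeping. I have to fix the sign and normalization conventions for $\CQ_\CA=-\frac12\la Z,\CA\J_nZ\ra$, for $\rho$, and for the direction of conjugation in (\ref{hengdeng1}), and then verify that the constant terms collapse to precisely $\CC(t)$. To keep track of them I would first do the check in the case $n=1$ with $\CA$ constant. Domain questions, namely differentiating on Schwartz space and extending by density and unitarity, are routine.
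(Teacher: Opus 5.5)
Your proposal is correct and follows essentially the paper's route. The paper does no computation here: it invokes \cite[Lemmas 4.3 and 4.4]{LLZ2025}, i.e.\ the two-step conjugation shown in the diagram of Section 3. There, $\rho(v(t))$ together with the phase removes $\CL_{\ell(t)}$, and then $\CM(U(t))$ carries $\CQ_{\CA(t)}$ to $\CQ_{\J_n}$. Your single intertwining computation for $\CU(t)$ is the composition of these two steps and uses the same infinitesimal identities.

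One sign in your bookkeeping is off. You have $\frac{d}{dt}\rho(v)=\ri\big(\la\dot v,Z\ra-\frac12\la\dot v,\J_n v\ra\big)\rho(v)$ and $\rho(v)\CQ_{\CA}(Z)\rho(v)^{-1}=\CQ_{\CA}(Z)-\la\CA v,Z\ra+\CQ_{\CA}(\J_n v)$. Using $\dot v=\CA v+\ell$, the scalar terms therefore combine to $+\frac12\la v,\J_n\ell\ra=-\CC(t)$, and this is what the phase $e^{\ri\int_0^t\CC(\tau)d\tau}$ cancels. With the value $-\frac12\la v,\J_n\ell\ra$ that you wrote, the phase would double the scalar term instead of cancelling it.
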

%\proof
%We start from the equation (\ref{ODEhq}). In the first step, we use the transformation $z=w+v(t)$. Since $w(t)$ is the solution of (\ref{ODEhq}), the original equation is reducible to
%$w'= \CA(t) w$. As the second step, we  use the symplectic transformation $w=U(t) y$. Since $U(t)$ satisfies
%(\ref{xinbianhuan}), one obtains $\dot{y}=\CB y$ by a direct computation.
%The following proof follows by Lemma \ref{reducibleprop}, Lemma \ref{reduciblepropnew} and Remark \ref{changshubianhuan}.
%\qed
%{\clo As an application of Proposition \ref{prop_MetaandSch}, we often choose $\CB=\mathbb J_n$.}

\section{Generalized reducibility - Proof of Theorem \ref{thm-reduc}}

According to Proposition \ref{prop_MetaandSch},
in order to establish the generalized reducibility, defined as in Theorem \ref{thm-reduc},
for the quadratic quantum Hamiltonian (\ref{orig-equ-1}), it is sufficient to show that the affine system
\begin{equation}\label{ODE-proof}
\dot{z}=\CA(t)z+\ell(t),\quad \CA(\cdot)\in C_b^{0}(\R, \sp(n,\R)),\quad \ell(\cdot)\in C_b^{0}(\R, \R^{2n}),
\end{equation}
is reducible in the generalized sense to the linear system $\dot{y}=\J_n y$.

\begin{prop}\label{prop_gen_red}
There exist $U\in C^1(\R, \Sp(n, \R))$ with $U(t_0) = \J_n$, and a particular solution $z_{*}\in C^1(\R, \R^{2n})$ of (\ref{ODE-proof}), such that, under the transformation $z(t) = U(t) y(t)+z_{*}(t)$,
the affine system (\ref{ODE-proof}) is conjugated to the linear system $\dot{y}=\J_n y $.
\end{prop}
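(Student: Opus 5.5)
Let $W(t)$ be the fundamental matrix of the homogeneous system $\dot z=\CA(t)z$, normalized by $W(t_0)=\I_{2n}$. Since $\CA(\cdot)$ is continuous and bounded with values in $\sp(n,\R)$, $W(t)$ exists globally on $\R$ and is $C^1$. It is symplectic for every $t$: differentiating $W^*\J_nW$ and using $\CA^*\J_n+\J_n\CA=0$, which is the defining property of $\sp(n,\R)$, gives $\frac{d}{dt}(W^*\J_nW)=W^*(\CA^*\J_n+\J_n\CA)W=0$, so $W^*\J_nW=\J_n$.

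I then define
$$U(t):=W(t)\,e^{-(t-t_0)\J_n}\J_n .$$
This is a product of symplectic matrices, since $e^{s\J_n}\in\Sp(n,\R)$ and $\J_n\in\Sp(n,\R)$. It is $C^1$ and satisfies $U(t_0)=\J_n$. Because $\J_n$ commutes with $e^{-(t-t_0)\J_n}$, we get
$$\dot U=\CA W e^{-(t-t_0)\J_n}\J_n - W e^{-(t-t_0)\J_n}\J_n\J_n=\CA(t)U-U\J_n.$$

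For the particular solution I use variation of constants:
$$z_*(t):=W(t)\int_{t_0}^t W(\tau)^{-1}\ell(\tau)\,d\tau .$$
It is $C^1$ and solves (\ref{ODE-proof}) with $z_*(t_0)=0$.

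Now substitute $z=Uy+z_*$ into (\ref{ODE-proof}):
$$\dot U y+U\dot y+\dot z_*=\CA Uy+\CA z_*+\ell .$$
The terms $\dot z_*$ and $\CA z_*+\ell$ cancel, leaving $(\CA U-U\J_n)y+U\dot y=\CA Uy$. Since $U$ is invertible, this is equivalent to $\dot y=\J_n y$. Conversely, every solution of (\ref{ODE-proof}) can be written as $z=Uy+z_*$ with $y$ a solution of $\dot y=\J_ny$, by taking $y:=U^{-1}(z-z_*)$. So the two systems are conjugate, as claimed.

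There is no real obstacle in this argument. The only points to check are the symplecticity of $W$, which follows from the Lie-algebra condition, and the commutation of $\J_n$ with $e^{-(t-t_0)\J_n}$. No boundedness of $U$ is needed, and that is exactly where generalized reducibility differs from normal reducibility.
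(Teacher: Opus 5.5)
Your proof is correct, but it takes a different route from the paper. The paper never introduces the fundamental matrix at this stage. It solves $\dot U=\CA(t)U-U\J_n$, $U(t_0)=\J_n$, directly, by vectorizing it with Kronecker products into a $4n^2$-dimensional linear system. It then checks $U(t)\in\Sp(n,\R)$ by differentiating the pairings $\langle X_a,\J_nX_b\rangle$ of the columns of $U$. The particular solution $z_*$ is simply taken to exist.

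You instead define $U=W e^{-(t-t_0)\J_n}\J_n$ explicitly. Symplecticity then follows from closure of $\Sp(n,\R)$ under products once $W^*\J_nW=\J_n$ is known, and (\ref{Udeeqs}) becomes a one-line check.

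This buys two things. First, relation (\ref{Wtex}), $W(t)=U(t)\exp((t-t_0)\J_n)\J_n^*$, holds by construction. The paper uses it in the proof of Theorem \ref{thmODEtoPDE} and obtains it only ``through direct computations''.

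Second, your construction sidesteps a step that the paper's column-wise check treats too quickly. Only for the pairing (\ref{sym1}) does the derivative vanish identically. For the off-diagonal pairings (\ref{sym2})--(\ref{sym3}), the $\CA$-terms cancel but the derivatives are linear combinations of other pairings. These include $\langle X_{n+j},\J_nX_{n+i}\rangle$, which the paper does not list even though $U^*\J_nU=\J_n$ requires it. What ``similarly'' really needs is the identity $\frac{d}{dt}(U^*\J_nU)=\J_n(U^*\J_nU)-(U^*\J_nU)\J_n$, plus uniqueness, since $\J_n$ is a stationary solution with the right initial value.

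The paper's route has the mild merit of characterizing $U$ intrinsically as the solution of (\ref{Udeeqs}), without reference to $W$. By uniqueness your $U$ is that same solution, so nothing is lost.
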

\proof We first consider the linear system (\ref{ODE-proof}) with $\ell(\cdot)=0$.
To conjugate this system to $\dot{y}=\J_n y $, it suffices to find
$U\in C^1(\R,\Sp(n,\R))$ satisfying
\begin{equation}\label{Udeeqs}
\dot{U}(t)=\CA(t)U(t)-U(t)\J_n,
\qquad U(t_0)=\J_n.
\end{equation}
By the classical ODE theory, there exists a unique solution
$U\in C^1(\R,{\rm gl}(2n,\R))$ to (\ref{Udeeqs}).
Therefore, it remains to show that
\begin{equation}\label{UtSp}
U(t)\in \Sp(n,\R),\quad \forall \ t\geq t_0.
\end{equation}
Let $X_j(\cdot) \in C^1(\R, \R^{2n})$, $1\leq j\leq 2n$, be the columns of the solution $U\in C^1(\R, {\rm gl}(2n, \R))$ of (\ref{Udeeqs}), satisfying
\begin{equation}\label{iniX}
X_j(t_0)=-{\bf e}_{n+j},\quad X_{n+j}(t_0)={\bf e}_{j}, \quad 1\leq j\leq n,
\end{equation}
where $\{{\bf e}_j\}_{1\leq j\leq 2n}$ is the canonical basis of $\R^{2n}$.
We rewrite Eq. (\ref{Udeeqs}) as
\begin{equation}\label{17plus1}
(\dot{X}_1, \cdots, \dot{X}_{2n}) = \CA(t)(X_1, \cdots, X_{2n}) + (X_1, \cdots, X_{2n})(-\J_n),
\end{equation}
which is, according to \cite{Lan69} or \cite{You99}, equivalent to the $4n^2-$dimensional system
\begin{equation}\label{17plus2}
\left(\begin{array}{c}\dot{X}_1 \\\vdots \\ \dot{X}_{2n}\end{array}\right)= \left(\I_{2n}\otimes \CA(t)+ \J_{n}\otimes \I_{2n}\right) \left(\begin{array}{c}X_1 \\\vdots \\ X_{2n} \end{array}\right).
\end{equation}
Through the ODE theory, there exists a unique solution
$$(x_{1,1}(t), \cdots, x_{1,2n}(t), \cdots, x_{2n, 1}(t), \cdots, x_{2n, 2n}(t))^*\in C^0(\R,\R^{4n^2})$$ to the system (\ref{17plus2}).
The columns $X_j(\cdot)$ of $U(\cdot)$ satisfy
$$X^*_j(t)=(x_{j,1}(t), \cdots, x_{j,2n}(t)),\quad 1\leq j\leq 2n.$$

Now let us show that  for any $t\in \mathbb R$
\begin{equation}\label{sym1}
\la X_j(t), \J_n X_{n+j}(t) \ra = 1, \quad j = 1, 2, \cdots, n.
\end{equation}
%where \( \langle \cdot, \cdot \rangle \) denotes the inner product.
Through a straightforward computation, (\ref{17plus2}) is equivalent to
the following system with the same initial values
$$
\dot{X}_j = \CA(t) X_j + X_{n+j},\quad \dot{X}_{n+j} = -X_j + \CA(t) X_{n+j},\quad 1\leq j\leq n.
%\left\{\begin{array}{c}
%X'_1 = \CA(t) X_1 + X_{n+1} \\
%\vdots \\
%X'_n = \CA(t) X_n + X_{2n} \\
%X'_{n+1} = -X_1 + \CA(t) X_{n+1} \\
%\vdots \\
%X'_{2n} = -X_n + \CA(t) X_{2n}.
%\end{array}\right.
$$
Differentiating $\la X_j(t), \J_n X_{n+j}(t) \ra$ w.r.t. $t$, we have, for $1\leq j\leq n$,
\begin{eqnarray*}
& &\frac{d}{dt} \la X_j(t), \J_n X_{n+j}(t) \ra\\
&=& \la \dot{X}_j(t), \J_n X_{n+j}(t) \ra + \la X_j(t), \J_n \dot{X}_{n+j}(t) \ra \\
&=& \la \CA(t) X_j + X_{n+j}, \J_n X_{n+j} \ra  +  \la X_j, \J_n (-X_j + \CA(t) X_{n+j}) \ra\\
&=&\la X_j, \CA(t)^* \J_n X_{n+j} \ra + \la X_j, \J_n \CA(t) X_{n+j} \ra\\
&=&\la X_j, (\CA(t)^* \J_n + \J_n \CA(t)) X_{n+j} \ra \ = \ 0.
\end{eqnarray*}
The last equality follows from $\CA(t)\in \sp(n,\R)$.
Together with (\ref{iniX}), (\ref{sym1}) is verified.
Similarly, one can prove that
\begin{eqnarray}
& &\la X_j(t), \J_n X_{i}(t) \ra = 0, \quad i, j = 1, 2, \cdots, n,\label{sym2}\\
& &\la X_j(t), \J_n X_{n+i}(t) \ra = 0, \quad i, j = 1, 2, \cdots, n, \quad i\neq  j.\label{sym3}
\end{eqnarray}
Combining  (\ref{sym1}) -- (\ref{sym3}), we obtain (\ref{UtSp}).

We now return to the affine system (\ref{ODE-proof}) with non-vanishing
$\ell(\cdot)$. Since $z_{*}$ is a particular solution of (\ref{ODE-proof}),
the translation
$$
z(t)=w(t)+z_{*}(t)
$$
transforms the affine system into the linear system
$
\dot w=\CA(t)w$.
This linear system is further reduced to
$
\dot y=\J_n y
$
via the transformation $w(t)=U(t)y(t)$, where
$U(\cdot)\in C^1(\R,\Sp(n,\R))$ satisfies (\ref{Udeeqs}).\qed

\medskip

Combining Propositions \ref{prop_MetaandSch} and \ref{prop_gen_red}, Theorem \ref{thm-reduc} is shown through the following diagram:

\noindent$$
\begin{array}{rcccl}
 & {\rm Affine \ system}   & &    {\rm Hamiltonian \ PDE} &  \\
  &   &  &  &   \\
&z'=\CA(t)z+\ell(t) &\longleftrightarrow&  \frac{1}{{\rm i}} \partial_t \psi= (  \CQ_{\CA(t)} +  \CL_{\ell(t)} )\psi & \\
  &   &  &  &   \\
z=w+z_*(t) &\big\downarrow &  & \big\downarrow  &  \psi =\rho(z_*(t))  \phi  \\
  &  &  &  &   \\
  &w'=\CA(t) w   &\longleftrightarrow&  \frac{1}{{\rm i}}  \partial_t \phi=\CQ_{\CA(t)}\phi  &  \\
    &  &  &  &   \\
w=U(t) y & \big\downarrow &  & \big\downarrow    & \phi  =\CM(U(t))   \varphi \\
  &  &  &  & \\
&y'=\J_n y  &\longleftrightarrow&   \frac{1}{{\rm i}} \partial_t \varphi= \CQ_{\J_n}\varphi&
  \end{array}
$$
%\stackrel{\rm Quantized}{\longrightarrow}

\section{Growth rate of Sobolev norms}

With the generalized reducibility, we will investigate the long-time behavior of solutions to the corresponding quantum Hamiltonian in the Sobolev space.

%
%
%\begin{lemma}\label{redODE}
%The  affine system (\ref{ODE-proof})
%\begin{eqnarray}\label{Ats2}
%\dot{z} = \CA(t) z+\ell(t)
%\end{eqnarray}
%can be reduced to the equation
%$\dot{y} = \J_n y $
%under the  transformation \( z(t) = U(t) y(t)+z_{*}(t) \),
%where $\CA(t) \in C_b^0(\mathbb{R}, sp(n, \mathbb{R}))$, $z_{*}(t)$ is the solution of (\ref{Ats2}) which satisfies $z_*(t_0)=\alpha\in \R^{2n}$ and
%\( U(t)\in C^0(\R, Sp(n, \R))\bigcap C^1(\R, gl(2n, \R)) \), which satisfies (\ref{Udeeqs}).
%\end{lemma}

%{\clo\begin{lemma}
%Consider the system
%\begin{eqnarray}\label{18plus1}
%\begin{cases}
%\dot{W}(t) = \CA(t) W(t) \\
%W(t_0) = \I_{2n},
%\end{cases}
%\end{eqnarray}
%then \( W(t) = U(t) \exp((t-t_0) \J_n) \J_n^{*} \), where \( U(t) \) satisfies (\ref{Udeeqs}).
%\end{lemma}}
%
%\proof
%Let \( U(t) = W(t) V(t) \). Differentiating  it on both sides and using  the system (\ref{18plus1}), we obtain
%\begin{eqnarray*}
%\dot{U}(t) &=& \dot{W}(t) V(t) + W(t) \dot{V}(t)\\
%& = &\CA(t) U(t) - U(t) \J_n\\
%&= & \CA(t)U(t)-W(t)V(t) \J_n,
%\end{eqnarray*}
%On the other hand, from the Liouville theorem, $\det W(t)=\det W(t_0)=1$. Note $U(t_0)=\J_{n}$, it follows
%\[
%\begin{cases}
%\dot{V}(t) = - V(t)\J_n \\
%V(t_0) = \J_n,
%\end{cases}
%\]
%which follows $V(t)= \J_n \exp(-(t-t_0) \J_n)$. Thus,
%$
%U(t) = W(t) \J_n \exp(-(t-t_0) \J_n),
%$
%which follows
%\[
%W(t) = U(t) \exp((t-t_0)\J_n) \J_n^{*}.
%\]
%\qed
%\begin{remark}\label{WandU}
%It is easy to see $\|W(t)\|\simeq \|U(t)\|$.
%\end{remark}

\noindent{\it Proof of Theorem \ref{thmODEtoPDE}.} From Proposition \ref{prop_MetaandSch} and \ref{prop_gen_red}, the quantum Hamiltonian PDE
(\ref{orig-equ-2})
is transformed into
\begin{equation}\label{HsQHO}
 \frac{1}{{\rm i}} \partial_t \varphi= \CQ_{\J_n}\varphi = \CT \varphi
\end{equation}
under the $L^2-$unitary transformation $\psi(t)=\rho(z_{*}(t))\CM(U(t)) \varphi(t)$, where a scalar factor is omitted.\\
%It is well known that the solution $\varphi(t)$ to Eq. (\ref{HsQHO}) satisfies
%\begin{equation}\label{bddHsQHO}
%\|\varphi(t)\|_s\simeq \|\varphi(t_0)\|_s,\quad s\geq 0.
%\end{equation}
\indent Let $W(t)$ be the fundamental solution matrix‌ of the linear system $\dot{w}=\CA(t)w$ with
$W(t_0) = \I_{2n}$. Through direct computations, we have
\begin{align}\label{Wtex}
W(t) = U(t) \exp((t-t_0) \J_n) \J_n^{*},
\end{align}
with $U(t)$ in Proposition \ref{prop_gen_red}.
Then, combining  (\ref{Wtex}) with Proposition \ref{schandmeta}, we obtain (\ref{esti_Hs}), since
%\begin{eqnarray*}
%\|\psi(t)\|_s
%&=&\| \rho(z_{*}(t))\CM(U(t)) \varphi(t) \|_s \nonumber\\
%&\simeq&\|\varphi(t_0)\|_s\\
%&=&\| \rho(z_{*}(t))\CM(U(t)) \CM(\J_n^*) \rho(-z_{*}(t_0)) \psi_0 \|_s. \end{eqnarray*}
\begin{align*}
\|\psi(t)\|_s
&=\| \rho(z_{*}(t))\CM(U(t)) \varphi(t) \|_s    \\
& \simeq \|\rho(z_{*}(t)) \CM(W(t)) \rho(-z_{*}(t_0))\psi(t_0)\|_s\\
& \simeq \|z_{*}(t)\|^s +\|W(t)\|^s,
\end{align*}
where the implicit constant depends on $z_{*}(t_0)$ and $\psi(t_0)$.
In the case $\ell(\cdot)=0$, the particular solution $z_{*}$ can be interpreted as $0$. Since $\rho(0)=\Id$, we have $\|\psi(t)\|_s\simeq \|W(t)\|^s$. \qed

\medskip

By Theorem \ref{thmODEtoPDE}, we see that in order to study the solution of the quadratic quantum Hamiltonian (\ref{orig-equ-2}), it is in fact sufficient to analyze the solutions of the corresponding classical Hamiltonian systems (\ref{orig-ODE-2}), regardless of whether the latter are reducible in the classical sense.
To prove Theorems~\ref{thmGR-mono} and~\ref{thmGR-osci}, we therefore establish the following abstract result for homogeneous quadratic quantum Hamiltonians.

Given $f\in C^2([t_0,\infty[,\R^*_+)$, assume that
\begin{eqnarray}
& & \sup_{t\geq t_0}\frac{|f'(t)|}{f(t)}<\infty, \label{condf1}\\
& &\sup_{t\geq t_0}\left|\int_{t_0}^t  \frac{f'(s)}{f(s)}\cos(2s) \, ds\right|<\infty,\label{condf2}\\
& &\sup_{t\geq t_0}\left|\int_{t_0}^t \sin(2s)\frac{f'(s)}{f(s)}\exp\left\{-2\int_{t_0}^s  \frac{f'(\tau)}{f(\tau)}\cos^2(\tau) \, d\tau \right\} \, ds\right|<\infty. \label{condf3}
\end{eqnarray}
Throughout the remainder of this section, all implicit constants appearing in estimates denoted by ``$\lesssim$", ``$\gtrsim$" and ``$\simeq$" are allowed to depend on the given growth rate function $f$ and the initial moment $t_0$.

\begin{thm}\label{thmGr}
Assume that $f\in C^2([t_0,\infty[,\R^*_+)$ satisfies (\ref{condf1}) -- (\ref{condf3}). There exists $\phi_f\in C^0([t_0,\infty[,\R)$ such that, for $s>0$ and a non-vanishing initial condition $u(t_0)\in \CH^s$, there exists a constant $c_0>1$ such that (\ref{growth}) holds for
the solution $u=u(t)$ to Eq. (\ref{QHe-abs}).
\end{thm}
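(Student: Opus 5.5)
The plan is to reduce everything to the classical Hill equation. By Theorem \ref{thmODEtoPDE} applied with $\ell=0$, $n=1$, the solution of (\ref{QHe-abs}) satisfies $\|u(t)\|_s\simeq_{u(t_0)}\|W(t)\|^s$. Here $W$ is the fundamental matrix of
$$\dot z=\CA_\phi(t)z,\qquad \CA_\phi(t)=\begin{pmatrix}0 & -(1+\phi(t))\\ 1 & 0\end{pmatrix},\qquad z=(\xi,x)^*,$$
which is the same as $\ddot x+(1+\phi(t))x=0$. So it suffices to build $\phi_f\in C^0$ with $\|W(t)\|\simeq f(t)^{1/2}$ for $t\ge t_0$. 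Since $\det W=1$, the singular values of $W$ are $\sigma(t)$ and $\sigma(t)^{-1}$. It is therefore enough to show two things: every solution has $\|z(t)\|\lesssim f^{1/2}\|z(t_0)\|$, and at least one solution has $\|z(t)\|\gtrsim f^{1/2}$.

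To construct $\phi_f$, I would write a solution in amplitude–phase (Prüfer) form, $x=r\sin\theta$ and $\dot x=r\cos\theta$. This gives
$$\frac{\dot r}{r}=-\phi\sin\theta\cos\theta,\qquad \dot\theta=1+\phi\sin^2\theta .$$
I then define $\phi_f$ by feedback along this distinguished solution: $\phi_f(t)=-c\,\frac{f'}{f}(t)\,g(\theta(t))$, with a fixed trigonometric $g$ and constant $c$. The choice is made so that $\frac{d}{dt}\ln r^2=\frac{f'}{f}$ up to an oscillating remainder of the form $\frac{f'}{f}\cos(2\cdot)$. The resulting autonomous ODE for $(r,\theta)$ has a global solution because $f'/f$ is bounded by (\ref{condf1}). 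For the same reason $\phi_f$ is bounded and continuous, and $\dot\theta=1+O(f'/f)$, so $\theta(t)=t+O(1)$ after integrating the bounded perturbation. With this $\theta$, (\ref{condf2}) makes the oscillatory remainder integrate to $O(1)$. Hence $r(t)^2\simeq f(t)$, and this solution gives the lower bound $\|W(t)\|\gtrsim f^{1/2}$.

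For the upper bound I would take a second, independent solution through reduction of order, $x_2=x_1\int x_1^{-2}$, or equivalently through the Wronskian identity. Writing it in the same Prüfer variables, its amplitude satisfies a linear ODE whose integrating factor is $\exp\{-2\int\frac{f'}{f}\cos^2\}$. The only term that could make it grow faster than $f^{1/2}$ is exactly the integral in (\ref{condf3}). So (\ref{condf3}) bounds this solution by $f^{1/2}$, and $\|W\|\lesssim f^{1/2}$ follows. Combined with Theorem \ref{thmODEtoPDE}, this yields (\ref{growth}).

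I expect the main obstacle to be the exact bookkeeping in the choice of $g$ and $c$. The averaged growth must come out to exactly $f'/f$. Simultaneously, every error term must reduce precisely to the oscillatory integrals controlled by (\ref{condf2}) and (\ref{condf3}), after the phase shift $\theta-t=O(1)$ is absorbed. I would first try $g(\theta)=\sin 2\theta$ and adjust $c$, then check the cross terms.
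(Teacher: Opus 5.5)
The gap is the phase control, and it breaks the feedback construction rather than being a bookkeeping issue.

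\textbf{Where the argument fails.} The inference ``$\dot\theta=1+O(f'/f)$, hence $\theta(t)=t+O(1)$'' is invalid: integrating a bounded perturbation only gives $O(t-t_0)$. A concrete failure is $f=e^{\lambda t}$ with small $\lambda>0$, which satisfies (\ref{condf1})--(\ref{condf3}). With your $g(\theta)=\sin 2\theta$ and $c=2$, the phase equation is $\dot\theta=1-2\lambda\sin 2\theta\sin^2\theta$. This equation is autonomous with rotation number $1-\frac{5}{8}\lambda^2+O(\lambda^3)\neq 1$, so $\theta-t$ drifts linearly. Moreover $\frac{d}{dt}\ln r^2=\lambda(1-\cos 4\theta)$, and the time average of $\cos 4\theta$ along this flow is $-\frac{\lambda^2}{4}+O(\lambda^3)$. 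Hence $\ln r^2=\lambda\bigl(1+\frac{\lambda^2}{4}+O(\lambda^3)\bigr)t+O(1)$, not $r^2\simeq f$.

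Retuning $g$ and $c$ does not cure this in general, for two structural reasons. First, (\ref{condf2}) and (\ref{condf3}) are tied to the \emph{exact} phase $2s$. In particular (\ref{condf2}) controls only the $\cos 2s$ component, so not even a constant offset $\theta=s+\delta_0$ can be absorbed. For instance, $\frac{f'}{f}(t)=\frac{1+\sin 2t}{t}$ satisfies (\ref{condf1})--(\ref{condf3}), while $\int_{t_0}^t\frac{f'}{f}(s)\sin 2s\,ds\to\infty$. Second, for bounded $g$ the function $g(\theta)\sin\theta\cos\theta$ vanishes on $\frac{\pi}{2}\mathbb{Z}$. So if it contained only harmonics of order at most $2$, its mean would be zero. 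The nonzero mean you need to produce $f'/f$ therefore forces higher harmonics such as $\cos 4\theta$, and the hypotheses say nothing about those.

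Your upper-bound paragraph has the same defect. The weight $\exp\{-2\int_{t_0}^s\frac{f'}{f}\cos^2\tau\,d\tau\}$ in (\ref{condf3}) is not what your construction generates. The analogous weight there is $r^{-2}$, i.e.\ $\exp\{-\int\frac{f'}{f}(1-\cos 4\theta)\}$ up to a constant for $g=\sin 2\theta$. So invoking (\ref{condf3}) there is unjustified.

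\textbf{The missing idea.} You need to pin the phase by prescribing the solution instead of the potential, which is what the paper does. Take $\xi_1(t)=\cos(t)H_f(t)$ with $H_f$ as in (\ref{Hft}), and \emph{define} $\phi_f:=-1-\xi_1''/\xi_1$. Since $H_f'=\frac{f'}{f}\cos^2(t)H_f$ vanishes at the zeros of $\cos t$, $\xi_1''$ is divisible by $\cos t$, and $\phi_f$ is the explicit continuous function (\ref{phif}). The growth exponent is then
$\int_{t_0}^t\frac{f'}{f}\cos^2\tau\,d\tau=\frac12\ln\frac{f(t)}{f(t_0)}+\frac12\int_{t_0}^t\frac{f'}{f}\cos 2\tau\,d\tau$,
with no phase error, so (\ref{condf2}) gives $H_f\simeq f^{1/2}$ directly. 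The companion solution with unit Wronskian is $\xi_2=\frac{\sin t}{H_f}+\cos(t)H_f\int_{t_0}^t\frac{\sin 2s}{H_f(s)^2}\frac{f'(s)}{f(s)}\,ds$, and it is $\lesssim f^{1/2}$ precisely by (\ref{condf3}). Finally, (\ref{condf1}) controls the derivatives $\xi_j'$.

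Your outer reduction is correct and coincides with the paper's: Theorem \ref{thmODEtoPDE} with $\ell=0$, and $\|W(t)\|\simeq f^{1/2}$ from one solution of size $\gtrsim f^{1/2}$ plus a uniform upper bound.
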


\begin{remark} For the exponential growth $f(t)=e^{\lambda t}$, $\lambda>0$, which has already been observed in \cite{MR2017}, can also be deduced by Theorem \ref{thmGr}
 since (\ref{condf1}) -- (\ref{condf3}) are satisfied for $(f'/f)(t)=\lambda$.

\end{remark}

\proof With $f$ satisfying the conditions (\ref{condf1}) -- (\ref{condf3}), we define the function $\phi_f$ explicitly as
\begin{eqnarray}
\phi_f(t)&:=&- \, \frac{f'(t)^2}{f(t)^2}\cos^4(t)-\frac{f''(t)f(t)-f'(t)^2}{f(t)^2}\cos^2(t)\label{phif}\\
& & + \,   \frac{4f'(t)}{f(t)}\cos(t)\sin(t).\nonumber
\end{eqnarray}
For the homogeneous quadratic quantum Hamiltonian
$$
\frac1\ri\partial_t u=\frac12\left(D^2 + (1+\phi_f(t))X^2\right) u,
$$ it is sufficient to consider the non-autonomous linear system
\begin{equation}\label{or1ODEsys}\left(\begin{array}{c}
\xi\\
x
\end{array}\right)'=\left(\begin{array}{cc}
0 &  1\\
-(1+\phi_f(t)) & 0
\end{array}\right)\left(\begin{array}{c}
\xi\\
x
\end{array}\right).\end{equation}
By putting $\xi'=x$, it is equivalent to consider the second-order differential equation
\begin{equation}\label{or2ODE}
\xi''+(1+\phi_f (t))\xi=0.
\end{equation}

Through direct computations, we can verify that Eq. (\ref{or2ODE}) admits two linearly independent solutions
\begin{eqnarray}
  \xi_1(t)&=&\cos(t)H_f(t),\label{x_1}\\
  \xi_2(t)&=&\frac{\sin(t)}{H_f(t)} +  \cos(t)H_f(t)\int_{t_0}^t \frac{\sin(2s)}{H_f(s)^2}\frac{f'(s)}{f(s)}\, ds,\label{x_2}
\end{eqnarray}
%\begin{eqnarray*}
% x_2(t)
%  &=&\sin(t)\exp\left\{-\int_{t_0}^t  \frac{f'(\tau)}{f(\tau)}\cos^2(\tau) \, d\tau \right\}\\
%  & & + \, \lambda \cos(t)\exp\left\{\int_{t_0}^t  \frac{f'(\tau)}{f(\tau)}\cos^2(\tau) \, d\tau \right\}\\
%  & & \  \  \  \  \  \cdot \, \int_{t_0}^t \sin(2s)\frac{f'(s)}{f(s)}\exp\left\{-2\int_{t_0}^s  \frac{f'(\tau)}{f(\tau)}\cos^2(\tau) \, d\tau \right\} \, ds
% \end{eqnarray*}
where $H_f(t)$ is defined as
\begin{equation}\label{Hft}
H_f(t):= \exp\left\{\int_{t_0}^t  \frac{f'(\tau)}{f(\tau)}\cos^2(\tau) \, d\tau \right\}.
\end{equation}
With the assumption (\ref{condf2}), it is easy to verify that $H_f(t)\simeq f(t)^\frac12$ since
\begin{eqnarray*}
\int_{t_0}^t  \frac{f'(\tau)}{f(\tau)}\cos^2(\tau) \, d\tau&=&\frac12\int_{t_0}^t  \frac{f'(\tau)}{f(\tau)} \, d\tau+\frac12\int_{t_0}^t  \frac{f'(\tau)}{f(\tau)}\cos(2\tau) \, d\tau\\
&=&\frac{\ln(f(t))}2-\frac{\ln(f(t_0))}2+\frac12\int_{t_0}^t  \frac{f'(\tau)}{f(\tau)}\cos(2\tau) \, d\tau.
\end{eqnarray*}

Taking $x_j(t)=\xi_j'(t)$ for $j=1,2$, direct computation yields that
\begin{eqnarray}
x_1(t)&=&\left(-\sin(t)+\frac{f'(t)}{f(t)}\cos^3(t)\right)H_f(t),\label{xi_1}\\
x_2(t)&=&\left(\cos(t)+\frac{f'(t)}{f(t)}\cos^2(t)\sin(t)\right)\frac{1}{H_f(t)}\label{xi_2}\\
 & & +  \, \left(\frac{f'(t)}{f(t)}\cos^3(t)-\sin(t)\right)H_f(t)\int_{t_0}^t \frac{\sin(2s)}{H_f(s)^2}\frac{f'(s)}{f(s)}\, ds,\nonumber
\end{eqnarray}
which implies, through the boundedness of $f'/f$, that
$$|x_1(t)|+|\xi_1(t)|= \left(|\cos(t)|+\left|\sin(t)-\frac{f'(t)}{f(t)}\cos^3(t)\right|
\right) H_f(t)\simeq f(t)^\frac12,$$
and, by the assumption (\ref{condf3}),
\begin{eqnarray*}
|x_2(t)|+|\xi_2(t)| \lesssim f(t)^{-\frac12}+\left| \frac{f'(t)}{f(t)}H_f(t)\int_{t_0}^t \frac{\sin(2s)}{H_f(s)^2}\frac{f'(s)}{f(s)}\, ds \right| \lesssim f(t)^{\frac12}.
\end{eqnarray*}
Hence, any fundamental solution matrix $W(t)$ to the system (\ref{or1ODEsys}) satisfies
$$\|W(t)\|\simeq |x_1(t)|+|\xi_1(t)| +|x_2(t)|+|\xi_2(t)|\simeq f(t)^{\frac12}.$$
Through Theorem \ref{thmODEtoPDE}, we obtain (\ref{growth}) for $f\in C^2([t_0,\infty[,\R^*_+)$ satisfying (\ref{condf1}) -- (\ref{condf3}).
\qed

\subsection{Monotone sub-exponential growth}\label{sec-GRmono}

Let us prove Theorem \ref{thmGR-mono}.
For $f\in \CM([t_0,\infty[)$, (\ref{condf1}) is satisfied since $f'/f$ tends to $0$ as $t\to\infty$.
With $\phi_f$ defined as in (\ref{phif}), if $f''/f$ tends to $0$ as $t\to\infty$, then $\phi_f(t) \to 0$. Consequently, (\ref{decaying-phi}) is shown.

To show (\ref{growth}) for Eq. (\ref{QHe-abs}) with $f\in \CM([t_0,\infty[)$, it is sufficient to verify the conditions (\ref{condf2}) and (\ref{condf3}) for $f\in\CM([t_0,\infty[)$, and apply Theorem \ref{thmGr}.
With the monotonicity assumption of $f'/f$, we have the convergence of the integral,
$$\int_{t_0}^\infty \cos(2\tau) \frac{f'(\tau)}{f(\tau)} \, d\tau $$
which implies the condition (\ref{condf2}), and, with $H_f(t)$ defined in (\ref{Hft}), $H(t)\simeq f(t)^\frac12$. Since the monotonicity of $f$ implies $f'/f\geq 0$ for $t\geq t_0$, we have
$$\left|\int_{t_0}^t \frac{\sin(2s)}{H_f(s)^2}\frac{f'(s)}{f(s)}\, ds\right| \leq \int_{t_0}^t \frac{1}{H_f(s)^2}\frac{f'(s)}{f(s)}\, ds
 \lesssim\int_{t_0}^t\frac{f'(s)}{f(s)^2}\, ds \leq \frac{1}{f(t_0)},$$
from which the condition (\ref{condf3}) is deduced.

\smallskip

To show (\ref{growth-lin}) for Eq. (\ref{QHe-abs-nonhomo}) with $f(t)=o(t^2)\in \CM([t_0,\infty[)$ satisfying (\ref{condisupp1}), it is sufficient to consider the affine system
\begin{equation}\label{or1ODEsys-nonhomo}
\left(\begin{array}{c}
\xi\\
x
\end{array}\right)'=\left(\begin{array}{cc}
0 &  1\\
-(1+\phi_f(t)) & 0
\end{array}\right)\left(\begin{array}{c}
\xi\\
x
\end{array}\right)+a\left(\begin{array}{c}
0\\
\sin(t)
\end{array}\right).\end{equation}
Let us assume that $t_0=2k\pi+\frac{\pi}{2}$ for some $k\in\N$ such that $f$ is well-defined on $[t_0,\infty[$.
With the two solutions obtained in (\ref{x_1}), (\ref{x_2}), (\ref{xi_1}), (\ref{xi_2}) of the linear system (\ref{or1ODEsys}), let
\begin{equation}\label{fund_sol}
W(t)=\left(\begin{array}{cc}W_{11}(t) & W_{12}(t) \\W_{21}(t) & W_{22}(t)\end{array}\right)
:=\left(\begin{array}{cc}\xi_2(t)&- \xi_1(t)\\ x_2(t)& -x_1(t)\end{array}\right),
\end{equation}
be the fundamental solution matrix.
It is easy to verify that $W(t_0)=\I_{2}$ and $W(t)\in{\rm SL}(2,\R)$. Then, we have the particular solution $z(t)$ of the affine system (\ref{or1ODEsys-nonhomo}) with initial condition $z(t_0)$ is
%{\clo We note that the principal growth comes from the column $\left(\begin{array}{c}
%W_{12}(t)\\
%W_{22}(t)
%\end{array}\right)$ and the elements in $\left(\begin{array}{c}
%W_{11}(t)\\
%W_{21}(t)
%\end{array}\right)$ decay with time.}
\begin{equation}\label{slopar}
z(t)=W(t)z(t_0)+a W(t) \int_{t_0}^t W(s)^{-1} \left(\begin{array}{c}
0\\
\sin (s)
\end{array}\right) \, ds.\\
\end{equation}
%$$W(s)^{-1} \ell =-a\left(\begin{array}{cc}W_{22}(s) & -W_{12}(s) \\-W_{21}(s) & W_{11}(s)\end{array}\right)\left(\begin{array}{c}
%0\\
%\cos(s)
%\end{array}\right)=a\cos(s)\left(\begin{array}{c}
%W_{12}(s)\\
%-W_{11}(s)
%\end{array}\right)$$
With a technical proof given in Appendix \ref{secapp}, we have for $t\ge t_0$,
\begin{equation}\label{gr_lin}
\left\|a W(t) \int_{t_0}^tW(s)^{-1} \left(\begin{array}{c}
0\\
\sin(s)
\end{array}\right) \, ds\right\| \simeq t-t_0.
\end{equation}
Since $\|W(t)\| \simeq f(t)^\frac12=o(t)$, we have $\|z(t)\|^s +   \|W(t)\|^s \simeq t^s$, which implies (\ref{growth-lin}) through Theorem \ref{thmODEtoPDE}.

\subsection{Oscillatory sub-exponential growth}\label{sec-GRosci}

To show Theorem \ref{thmGR-osci}, let us first define the following class of functions.
For given $N\in\N^*$, let $\CA_N([t_0,\infty[)$ be the class of $C^N$ functions $G$ on $[t_0,\infty[$ with derivatives satisfying $G^{(N)}\in L^1([t_0,\infty[)$ and
$$G^{(j)}(t)\to 0 \  {\rm as} \ t\to\infty, \quad 0\leq j\leq N-1.$$

\begin{lemma}\label{lem-osci-int}
For $G\in\CA_N([t_0,\infty[)$ with some $N\in\N^*$, we have
$$\sup_{t\geq t_0}\left\{\left|\int_{t_0}^t \cos(2s) G(s) \, ds \right|,\ \left|\int_{t_0}^t \sin(2s) G(s) \, ds \right|\right\}<\infty. $$
\end{lemma}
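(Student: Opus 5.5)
The plan is to integrate by parts $N$ times, moving derivatives onto $G$. Then the last integral is controlled by $G^{(N)}\in L^1$, and all boundary terms are controlled by $G^{(j)}(t)\to 0$.

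It is convenient to treat both integrals at once through the complex exponential $e^{2\ri s}$. It suffices to show that
$$I(t):=\int_{t_0}^t e^{2\ri s}G(s)\,ds$$
is bounded on $[t_0,\infty[$, since its real and imaginary parts are the two integrals in the statement.

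First, set $E_1(s):=\frac{1}{2\ri}e^{2\ri s}$, and inductively $E_{k+1}(s):=\frac{1}{2\ri}E_k(s)$. Thus $E_k(s)=(2\ri)^{-k}e^{2\ri s}$, which satisfies $E_{k+1}'=E_k$ and $|E_k|=2^{-k}$. Integrating by parts $N$ times (legitimate since $G\in C^N$) gives
$$I(t)=\sum_{j=0}^{N-1}(-1)^j\Big[E_{j+1}(s)G^{(j)}(s)\Big]_{s=t_0}^{s=t}+(-1)^N\int_{t_0}^t E_N(s)G^{(N)}(s)\,ds.$$

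Next, bound each piece uniformly in $t$.
\begin{itemize}
\item For $0\le j\le N-1$, the function $G^{(j)}$ is continuous on $[t_0,\infty[$ and tends to $0$ at infinity. Hence it is bounded, say $\sup_{s\ge t_0}|G^{(j)}(s)|=:M_j<\infty$. Each boundary term is then at most $2^{-j}\cdot 2M_j$.
\item The remaining integral is bounded by $2^{-N}\|G^{(N)}\|_{L^1([t_0,\infty[)}$, which is finite by assumption.
\end{itemize}
Summing these bounds gives $\sup_{t\ge t_0}|I(t)|<\infty$, which is the claim.

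There is no real obstacle; the argument is just the standard non-stationary-phase integration by parts. The one point to be careful about is that the hypothesis "$G^{(j)}\to0$" is used only to guarantee boundedness of $G^{(j)}$ on the whole half-line, which continuity together with the limit provides. In particular, convergence of the integral as $t\to\infty$ is not needed, only the uniform bound.
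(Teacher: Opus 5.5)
Your proposal is correct and follows the paper's proof: integrate by parts $N$ times, bound the boundary terms using the boundedness of $G^{(j)}$ for $0\le j\le N-1$, and bound the remainder using $G^{(N)}\in L^1$. The only difference is cosmetic: you work with $e^{2\ri s}$ (implicitly using that $G$ is real-valued) rather than the paper's phase-shifted cosines $\cos(2s-\frac{l\pi}{2})$, so both integrals are handled at once.
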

\proof
By integration by parts, we have
\begin{eqnarray*}\int_{t_0}^t\cos(2s) G(s) \, ds
&=&\frac12\left[\sin(2s) G(s)\right]_{s=t_0}^t -  \frac12\int_{t_0}^t \sin(2s) G'(s) \, ds\label{int1_it_1}\\
&=&\frac12\left[\cos\left(2s-\frac\pi2\right)G(s)\right]_{s=t_0}^t \\
& & - \,  \frac12\int_{t_0}^t \cos\left(2s-\frac\pi2\right) G'(s) \, ds.
\end{eqnarray*}
Let us show by recurrence that, for any $j\in\N^*$,
\begin{eqnarray*}
\int_{t_0}^t \cos(2s) G(s) \, ds&=&\sum_{l=1}^j\frac{(-1)^{l-1}}{2^l}\left[\cos\left(2s-\frac{l\pi}2\right)G^{(l-1)}(s)\right]_{s=t_0}^t\\
& & + \,  \frac{(-1)^j}{2^j}\int_{t_0}^t \cos\left(2s-\frac{j\pi}2\right)G^{(j)}(s) \, ds.\nonumber
\end{eqnarray*}
Assume that the above equality holds for $j=k\in\N^*$.
Through integration by parts,
\begin{eqnarray*}
& &\frac{(-1)^k}{2^k}\int_{t_0}^t \cos\left(2s-\frac{k\pi}2\right) G^{(k)}(s) \, ds\\
&=&\frac{(-1)^k}{2^{k+1}}\left[\cos\left(2s-\frac{(k+1)\pi}2\right)G^{(k)}(s)\right]_{s=t_0}^t \\
& & + \, \frac{(-1)^{k+1}}{2^{k+1}}\int_{t_0}^t \cos\left(2s-\frac{(k+1)\pi}2\right)G^{(k+1)}(s) \, ds,
\end{eqnarray*}
then we have that it also holds for $j=k+1$, and hence for every $j\in\N^*$ with $j\leq N$.
In particular, for $j=N$,
\begin{eqnarray*}
\int_{t_0}^t \cos(2s) G(s) \, ds&=&\sum_{l=1}^N\frac{(-1)^{l-1}}{2^l}\left[\cos\left(2s-\frac{l\pi}2\right)G^{(l-1)}(s)\right]_{s=t_0}^t  \\
& & + \, \frac{(-1)^{N}}{2^{N}}\int_{t_0}^t \cos\left(2s-\frac{N\pi}2\right)G^{(N)}(s) \, ds.
\end{eqnarray*}
Since $G\in \CA_N([t_0,\infty[)$, $G^{(N)}\in L^1([t_0,\infty[)$, we have that
$$\sup_{t\geq t_0}\left|\int_{t_0}^t \cos(2s) G(s) \, ds \right|<\infty. $$
Similarly,
$\displaystyle \sup_{t\geq t_0}\left|\int_{t_0}^t \sin(2s) G(s) \, ds \right|<\infty$.\qed

\medskip

\noindent{\bf Proof of Theorem \ref{thmGR-osci}.}
For $f$ given in (\ref{oscillatory_f}), with $\phi_f$ defined as in (\ref{phif}), by direct computations, we verify that (\ref{decay-oscillatory_f}) holds. With $H_f$ defined as in (\ref{Hft}), it is sufficient to show that,
$$ \frac{f'}{f}, \quad \frac{f'}{H_f^2\cdot f}\in \CA_1([t_0,\infty[)$$
Indeed, it can be verified by integration by parts. Hence $f$ satisfies (\ref{condf1}) -- (\ref{condf3}).
%{\clo$$\liminf_{t\to\infty}\frac{\|u(t)\|_{\CH^s}}{t^\frac{s}{6}}\leq c_2^{-1}, \qquad \limsup_{t\to\infty}\frac{\|u(t)\|_{\CH^s}}{t^\frac{s}{6}\ln(t)^\frac{s}{2}}\geq c_2. $$}
\qed

\appendix

\section{Proof of (\ref{gr_lin})}\label{secapp}

With $f\in\CM([t_0,\infty[)$, we first give some integral properties related to the function
\begin{equation}\label{Hft-app}
H_f(t):= \exp\left\{\int_{t_0}^t  \frac{f'(\tau)}{f(\tau)}\cos^2(\tau) \, d\tau \right\}.
\end{equation}
In view of the proof of Theorem \ref{thmGr}, we have
\begin{equation}\label{Hft-app_ineq0}
H_{f}(t)\simeq f(t)^{\frac12},
\end{equation}
where the implicit positive constants depend on $t_0$ and $f$. Moreover, $f\in\CM([t_0,\infty[)$ implies that
$$H'_f(t)=  \frac{f'(t)}{f(t)}\cos^2(t)H_f(t)\geq 0, $$
hence $H_{f}$ is increasing and tends to $\infty$ and $H_{f}^{-1}$ is decreasing and tends to $0$ as $t\to\infty$.

\begin{lemma}\label{lemma_app_ineq}
For $f\in \CM([t_0,\infty[)$ with $f(t)=o(t^2)$ as $t\to\infty$ and satisfying (\ref{condisupp1}), we have, for $t\geq t_0$,
  \begin{eqnarray}
 \int_{t_0}^t\frac{\sin^2(s)}{H_f(s)}\, ds&\simeq& (t-t_0)f(t)^{-\frac12}, \label{app_ineq1}\\
  \left|\int_{t_0}^t \sin(2s) H_f(s) \, ds\right|&\lesssim& f(t)^\frac12,\label{app_ineq3}\\
\left|\int_{t_0}^t  \sin(2s) H_f(s) \int_{s}^t \frac{\sin(2\sigma)}{H_f(\sigma)^2}\frac{f'(\sigma)}{f(\sigma)}\, d\sigma   \, ds\right|&\lesssim&1.\label{app_ineq4}
 \end{eqnarray}
 \end{lemma}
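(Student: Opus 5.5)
The common tool for all three estimates is (\ref{Hft-app_ineq0}), namely $H_f\simeq f^{1/2}$. We also use that $H_f$ is increasing, with $H_f'/H_f=(f'/f)\cos^2\le f'/f$. Finally, (\ref{condisupp1}) gives the doubling bound $f(2t)\le 2^\kappa f(t)$ and the estimate $f'/f\le \kappa/t$.

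\emph{(\ref{app_ineq1}).} The upper bound is not immediate: $1/H_f$ is decreasing, so $\int_{t_0}^t \sin^2/H_f \le \int_{t_0}^t ds/H_f(s)$, and this integral is larger than $(t-t_0)/H_f(t)$ rather than smaller. Instead I would use $H_f(s)\gtrsim f(s)^{1/2}$ together with the growth control coming from (\ref{condisupp1}). Since $tf'/f$ is decreasing and at most $\kappa<2$, the function $s\mapsto f(s)/s^{\kappa}$ is non-increasing, so $f(s)\ge f(t)(s/t)^{\kappa}$ for $s\le t$. Hence
\[
\int_{t_0}^t f(s)^{-1/2}\,ds\le f(t)^{-1/2}t^{\kappa/2}\int_{t_0}^t s^{-\kappa/2}\,ds\lesssim \frac{t}{1-\kappa/2}\,f(t)^{-1/2},
\]
where $\kappa<2$ is exactly what makes the last integral finite. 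Since $t\simeq t-t_0$ for $t\ge 2t_0$, this gives the upper bound there. For $t\in[t_0,2t_0]$ the bound is trivial because everything is comparable to constants. For the lower bound, monotonicity of $H_f$ gives $1/H_f(s)\ge 1/H_f(t)$, so $\int_{t_0}^t \sin^2(s)/H_f(s)\,ds\ge H_f(t)^{-1}\int_{t_0}^t\sin^2 \gtrsim (t-t_0)f(t)^{-1/2}$ when $t-t_0\ge \pi$. For small $t-t_0$ I would use that $t_0=2k\pi+\pi/2$, so $\sin^2\simeq 1$ near $t_0$.

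\emph{(\ref{app_ineq3}).} I would integrate by parts:
\[
\int_{t_0}^t \sin(2s)H_f(s)\,ds=\Big[-\tfrac12\cos(2s)H_f(s)\Big]_{t_0}^t+\tfrac12\int_{t_0}^t\cos(2s)H_f'(s)\,ds.
\]
The boundary term is $O(H_f(t))$. The remaining integral is bounded by $\int_{t_0}^t H_f' = H_f(t)-1$, because $H_f'\ge 0$. So the total is $\lesssim H_f(t)\simeq f(t)^{1/2}$.

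\emph{(\ref{app_ineq4}).} Set $K(s):=\int_s^t \sin(2\sigma)H_f(\sigma)^{-2}(f'/f)(\sigma)\,d\sigma$ and $P(s):=\int_{t_0}^s\sin(2r)H_f(r)\,dr$. Integrating by parts in $s$ (using $K(t)=0$ and $P(t_0)=0$) gives
\[
\int_{t_0}^t \sin(2s)H_f(s)K(s)\,ds=\int_{t_0}^t P(s)\sin(2s)H_f(s)^{-2}(f'/f)(s)\,ds.
\]
For $P$ I would use the identity from the proof of (\ref{app_ineq3}): $P(s)=-\tfrac12\cos(2s)H_f(s)+O(1)+\tfrac12\int_{t_0}^s\cos(2r)H_f'(r)\,dr$.

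The $O(1)$ term gives an integrand bounded by $H_f^{-2}f'/f\lesssim f'/f^2$. Its integral is $\lesssim 1/f(t_0)$, as in the proof of (\ref{condf3}).

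The main term gives $-\tfrac12\int_{t_0}^t \cos(2s)\sin(2s)H_f(s)^{-1}(f'/f)(s)\,ds$. It is bounded by $\int f'/f^{3/2}<\infty$.

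The last term is bounded by $\tfrac12 H_f(s)$. Naively it only gives the same $\int f'/f^{3/2}$ bound, and this is finite because $f$ is increasing with $\inf f>0$, so $\int_{t_0}^\infty f'f^{-3/2}=2f(t_0)^{-1/2}$. All three pieces are therefore uniformly bounded in $t$, which is (\ref{app_ineq4}).

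The step I expect to be the main obstacle is the upper bound in (\ref{app_ineq1}). The hypotheses $\kappa<2$ and $f=o(t^2)$ enter only there, through the growth comparison $f(s)\ge f(t)(s/t)^\kappa$; the other two estimates follow from integration by parts and the integrability of $f'/f^{3/2}$.
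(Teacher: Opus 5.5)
Your proof is correct. For (\ref{app_ineq3}) it coincides with the paper's argument. For (\ref{app_ineq4}), rewriting the double integral as $\int_{t_0}^t P(s)\sin(2s)H_f(s)^{-2}\frac{f'(s)}{f(s)}\,ds$ amounts to exchanging the order of integration relative to the paper's integration by parts, which integrates $\sin(2s)$ and differentiates $H_f(s)G(s,t)$. Both versions end with terms controlled by $\int f'/f^{2}$ and $\int f'/f^{3/2}$.

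The genuine difference is the upper bound in (\ref{app_ineq1}). The paper writes $\sin^2=\frac12(1-\cos 2s)$ and handles $\int_{t_0}^t\cos(2s)H_f(s)^{-1}ds$ by Dirichlet's test. It then integrates $\int_{t_0}^t H_f^{-1}$ by parts against $s$, obtaining $[sH_f(s)^{-1}]_{t_0}^t+\frac12\int_{t_0}^t s\frac{f'}{f}H_f^{-1}\,ds+\frac12\int_{t_0}^t s\cos(2s)\frac{f'}{f}H_f^{-1}\,ds$.
\begin{itemize}
\item The middle term is absorbed into the left-hand side using $sf'/f\le\kappa<2$.
\item The last term is bounded by Dirichlet's test; this is where the monotonicity of $tf'/f$ in (\ref{condisupp1}) is used.
\item The hypothesis $f=o(t^2)$ ensures the leftover $O(1)$ terms are dominated by $tf(t)^{-1/2}$.
\end{itemize}

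Your comparison $f(s)\ge f(t)(s/t)^\kappa$ replaces all of this with one line and uses only $tf'/f\le\kappa$. It needs neither the monotonicity of $tf'/f$ nor $f=o(t^2)$; the latter in fact follows from (\ref{condisupp1}), since $f\lesssim t^\kappa$. So your closing remark that $f=o(t^2)$ enters at this step is inaccurate: your argument never uses it.

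Likewise, your lower bound $\int_{t_0}^t\sin^2(s)H_f(s)^{-1}ds\ge H_f(t)^{-1}\int_{t_0}^t\sin^2(s)\,ds$, combined with $t_0=2k\pi+\frac\pi2$, holds uniformly down to $t=t_0$. The paper's estimates carry additive $O(1)$ errors, giving $c(t-t_0)f(t)^{-1/2}-C$ below and $\lesssim tf(t)^{-1/2}$ above. They therefore only literally yield (\ref{app_ineq1}) when $t-t_0$ is bounded below, so your version is the cleaner one near $t_0$. One small correction of phrasing: on $[t_0,2t_0]$, both sides are comparable to $t-t_0$ because $1\le H_f\le H_f(2t_0)$ there, not ``comparable to constants''.
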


\proof To establish (\ref{app_ineq1}), noting that $\sin^2(s)= \frac12(1-\cos(2s))$,
 it is sufficient to show that for $t\geq t_0$,
%\begin{equation}\label{int_decom}
%\int_{t_0}^tH_f(s)^{-1}\, ds\simeq  tf(t)^{-\frac12},\quad \left|\int_{t_0}^t\frac{\cos(2s)}{H_f(s)} \, ds\right| \lesssim 1.
%\end{equation}
 \begin{eqnarray}
 \int_{t_0}^tH_f(s)^{-1}\, ds&\simeq&  (t-t_0)f(t)^{-\frac12},\label{int_decom1}\\
  \left|\int_{t_0}^t\frac{\cos(2s)}{H_f(s)} \, ds\right| &\lesssim& 1.\label{int_decom2}
 \end{eqnarray}
Since $H_{f}^{-1}$ is decreasing and tends to $0$ as $t\to\infty$, by Dirichlet's test, (\ref{int_decom2}) is shown by convergence of integral. Moreover,
combining with (\ref{Hft-app_ineq0}), we have that, for $t\geq t_0$,
\begin{equation}\label{int_decom1-upper}
\int_{t_0}^t H_{f}(s)^{-1} ds\geq \int_{t_0}^{t} H_{f}(t)^{-1} ds \gtrsim (t-t_0)f(t)^{-\frac12}.
\end{equation}
Through integration by parts, we have
 \begin{eqnarray*}
 \int_{t_0}^tH_f(s)^{-1}\, ds &=& \left[sH_f(s)^{-1}\right]_{s=t_0}^t+\frac12 \int_{t_0}^t \frac{sf'(s)}{f(s)} H_f(s)^{-1}\, ds\\
& & + \, \frac12 \int_{t_0}^t s\cos(2s)\frac{f'(s)}{f(s)} H_f(s)^{-1}\, ds \\
 &=:&J_1+J_2+J_3.
\end{eqnarray*}
By (\ref{Hft-app_ineq0}) and the monotonicity of $f$, we have
$$J_1= t f(t)^{-\frac12}- t_0 f(t_0)^{-\frac12} \lesssim  (t-t_0) f(t)^{-\frac12} .$$
Under the assumption (\ref{condisupp1}), we have
$$s \frac{f'(s)}{f(s)}\leq \kappa<2,\quad s \frac{f'(s)}{f(s)}H_f(s)^{-1}\to 0 \;\ \rm monotonously,\quad s\to\infty. $$
Then we obtain that
$$J_2\leq \frac{\kappa}{2}\int_{t_0}^tH_f(s)^{-1}\, ds $$
and, through Dirichlet's test, $J_3\lesssim 1$. Since $f(t)=o(t^2)$, we have
$$\left(1-\frac{\kappa}{2}\right)\int_{t_0}^t H_{f}(s)^{-1}ds\leq J_1+J_3
\lesssim  tf(t)^{-\frac12}. $$
Combining with (\ref{int_decom1-upper}), we obtain (\ref{int_decom1}).

For the integral in (\ref{app_ineq3}), we have
\begin{eqnarray*}
\int_{t_0}^t \sin(2s) H_f(s) \, ds &=& - \, \frac12\left[\cos(2s) H_f(s)\right]_{s=t_0}^t\\
& & + \, \frac12\int_{t_0}^t\cos(2s) \cos^2(s)H_f(s)\frac{f'(s)}{f(s)}\, ds.
\end{eqnarray*}
In view of (\ref{Hft-app_ineq0}), we obtain (\ref{app_ineq3}) since
 \begin{eqnarray*}
\left|\left[\cos(2s) H_f(s)\right]_{s=t_0}^t\right|&\lesssim& f(t)^\frac12,\\
\left|\int_{t_0}^t\cos(2s) \cos^2(s)H_f(s)\frac{f'(s)}{f(s)}\, ds\right|&\lesssim & \int_{t_0}^t \frac{f'(s)}{f(s)^\frac12}\, ds \ \lesssim \ f(t)^\frac12.
\end{eqnarray*}

For $t_0\leq s\leq t$, let
$$G(s,t):= \int_{s}^t \frac{\sin(2\sigma)}{H_f(\sigma)^2}\frac{f'(\sigma)}{f(\sigma)}\, d\sigma.$$
which satisfies that
\begin{equation}\label{Gst}
|G(s,t)|\leq \int_{s}^{t} \frac{1}{H_f(\sigma)^2}\frac{f'(\sigma)}{f(\sigma)}\, d\sigma\lesssim \int_{s}^t \frac{f'(\sigma)}{f(\sigma)^2}\, d\sigma= \frac{1}{f(s)}-\frac{1}{f(t)}.\end{equation}
For the integral in (\ref{app_ineq4}), we have, by integration by parts,
\begin{eqnarray*}
 \int_{t_0}^t  \sin(2s) H_f(s) G(s,t) \, ds
&=& -\frac12\left[\cos(2s) H_f(s) G(s,t)\right]_{s=t_0}^t\\
& & - \,  \frac12 \int_{t_0}^t \cos(2s) H_f(s) \frac{\sin(2s)}{H_f(s)^2}\frac{f'(s)}{f(s)}   \, ds\\
& & + \,  \frac12 \int_{t_0}^t \cos(2s)\cos^2(s) H_f(s)\frac{f'(s)}{f(s)}G(s,t)  \, ds.
 \end{eqnarray*}
In view of (\ref{Hft-app_ineq0}) and (\ref{Gst}), we obtain (\ref{app_ineq4}) since
\begin{eqnarray*}
\left|\left[\cos(2s) H_f(s) G(s,t)\right]_{s=t_0}^t\right|&\lesssim&|G(t_0,t)| \ \lesssim \ \frac{1}{f(t_0)},\\
\left|\int_{t_0}^t \cos(2s) H_f(s) \frac{\sin(2s)}{H_f(s)^2}\frac{f'(s)}{f(s)}   \, ds \right|&\lesssim& \int_{t_0}^t  \frac{f'(s)}{f(s)^\frac32}   \, ds \ \lesssim \ \frac{1}{f(t_0)^\frac12},\\
\left|\int_{t_0}^t \cos(2s)\cos^2(s) H_f(s)\frac{f'(s)}{f(s)}G(s,t) \, ds \right|&\lesssim& \int_{t_0}^t  \frac{f'(s)}{f(s)^\frac32}   \, ds \ \lesssim \ \frac{1}{f(t_0)^\frac12}.\qed
 \end{eqnarray*}

\smallskip

Now let us prove the estimate (\ref{gr_lin}). With the fundamental solution matrix (\ref{fund_sol}) to the linear system (\ref{or1ODEsys}), by direct computations, we have
 $$
a W(t)W(s)^{-1} \left(\begin{array}{c}
0\\
\sin(s)
\end{array}\right) = a\sin(s) \left(\begin{array}{c}
W_{12}(t)W_{11}(s)-W_{11}(t)W_{12}(s) \\
W_{22}(t)W_{11}(s)-W_{21}(t)W_{12}(s)
\end{array}\right),$$
and hence the particular solution satisfies
%$$W(t)=\left(\begin{array}{cc}W_{11}(t) & W_{12}(t) \\W_{21}(t) & W_{22}(t)\end{array}\right)
%:=\left(\begin{array}{cc}x_2(t)&- x_1(t)\\\xi_2(t)&- \xi_1(t)\end{array}\right),$$
%with the two solutions defined in (\ref{x_1}), (\ref{x_2}) and (\ref{xi_1}), (\ref{xi_2})
$$z(t)=a  \left(\begin{array}{c}
\displaystyle\int_{t_0}^t \sin(s)\left(W_{12}(t)W_{11}(s)-W_{11}(t)W_{12}(s)\right) \, ds \\[4mm]
\displaystyle\int_{t_0}^t \sin(s)\left(W_{22}(t)W_{11}(s)-W_{21}(t)W_{12}(s)\right) \, ds
\end{array}\right).$$
Omitting the non-zero coefficient $a$,  two components of $z(t)$ satisfy
 \begin{eqnarray*}
& &\int_{t_0}^t \sin(s)(W_{12}(t)W_{11}(s)-W_{11}(t)W_{12}(s)) \, ds\\
&=&- \, \cos(t)H_f(t) \int_{t_0}^t  \frac{\sin^2(s)}{H_f(s)}   \, ds + \frac{\sin(t)}{2H_f(t)} \int_{t_0}^t \sin(2s) H_f(s) \, ds\\
& &   + \, \frac12 \cos(t)H_f(t) \int_{t_0}^t  \sin(2s) H_f(s) \int_{s}^t \frac{\sin(2\sigma)}{H_f(\sigma)^2}\frac{f'(\sigma)}{f(\sigma)}\, d\sigma   \, ds\\
&=:& - \, \cos(t)H_f(t) \int_{t_0}^t  \frac{\sin^2(s)}{H_f(s)}   \, ds+\CR_1(t),
 \end{eqnarray*}
\begin{eqnarray*}
& &\int_{t_0}^t \sin(s) \left(W_{22}(t)W_{11}(s)-W_{21}(t)W_{12}(s) \right) \, ds\\
&=&\frac{1}{2H_f(t)}\left(\cos(t)+\frac{f'(t)}{f(t)}\cos^2(t)\sin(t)\right)\int_{t_0}^t \sin(2s)H_f(s) \, ds\\
& &+ \, \left(\sin(t)- \frac{f'(t)}{f(t)}\cos^3(t) \right)H_f(t) \int_{t_0}^t \frac{\sin^2(s)}{H_f(s)} \, ds\\
& & + \, \frac12\left(\frac{f'(t)}{f(t)}\cos^3(t)-\sin(t)\right)H_f(t)\\
& & \ \ \ \ \  \cdot \int_{t_0}^t \sin(2s) H_f(s)\int_s^t \frac{\sin(2\sigma)}{H_f(\sigma)^2}\frac{f'(\sigma)}{f(\sigma)} \, d\sigma \, ds\\
&=:&\sin(t)H_f(t) \int_{t_0}^t  \frac{\sin^2(s)}{H_f(s)}   \, ds +  \CR_2(t),
\end{eqnarray*}
where, by Lemma \ref{lemma_app_ineq}, we have, for $t\geq t_0$,
$$ H_f(t) \int_{t_0}^t  \frac{\sin^2(s)}{H_f(s)}   \, ds\simeq t-t_0,\quad |\CR_1(t)|, \ |\CR_2(t)| \lesssim f(t)^\frac12.$$
Hence, for $t\geq t_0$,
\begin{eqnarray}
\frac{1}{a^2} \|z(t)\|^2&=&H^2_{f}(t) \left(\int_{t_0}^t \frac{\sin^2 (s)}{ H_{f}(s)} ds\right)^2+\CR_1(t)^2 +\CR_2(t)^2\nonumber\\
& &  + \, 2\left(\CR_2(t)\sin (t) -\CR_1(t)\cos (t)\right) H_{f}(t) \int_{t_0}^t\frac{\sin^2 (s)}{H_{f}(s)} ds,\label{crossing}
\end{eqnarray}
with the crossing terms in (\ref{crossing}) being $o(t^2)$ as $t\to\infty$.
Therefore, (\ref{gr_lin}) is shown.

 \medskip

\end{document}